\documentclass[12pt,a4paper,leqno]{amsart}
\usepackage[T1]{fontenc}
\pdfoutput=1

\usepackage{amssymb,amsfonts,mathtools,mathrsfs}   
\usepackage{graphicx}
\usepackage{placeins}
\usepackage{enumerate}
\usepackage{tikz}
\usepackage{setspace}
\usepackage{xcolor}
\usepackage{hyperref}
\hypersetup{colorlinks=true, linkcolor=blue, citecolor=red}
\usepackage{enumitem}

\usepackage[
  margin=1in,
  includefoot,
  footskip=15pt,
]{geometry}

\newtheorem{thm}{Theorem}[section]
 
\newtheorem{lem}[thm]{Lemma} 
\newtheorem{prop}[thm]{Proposition}

\theoremstyle{definition} 
\newtheorem{defn}[thm]{Definition}

\theoremstyle{remark}

\numberwithin{equation}{section}

\newcommand{\R}{\mathbb{R}}

\newcommand{\N}{\mathbb{N}}

\newcommand{\cC}{\mathscr{C}}

\newcommand{\dist}{\operatorname{dist}}
\newcommand{\supp}{\operatorname{supp}}

\newcommand{\PV}{\mathrm{P.V.}}

\newcommand{\dd}{\mathop{}\!\mathrm d} 
 
\newcommand{\Per}{\operatorname{Per}}
\newcommand{\loc}{\text{loc}}

\usepackage{newpxtext}
\renewcommand\mathcal[1]{\text{\usefont{OMS}{cmsy}{m}{n}#1}}

\allowdisplaybreaks

\begin{document}

\title{A half-space theorem for nonlocal minimal surfaces}

\author[M. Cozzi]{Matteo Cozzi}
\address{(M. Cozzi) Università degli Studi di Milano, Dipartimento di Matematica ``Federigo Enriques'', via Saldini 50, 20133 Milan, Italy.}
\email{matteo.cozzi@unimi.it}

\author[J. Thompson]{Jack Thompson}
\address{(J. Thompson) The University of Western Australia (M019), 35 Stirling Highway, Perth WA 6009, Australia.}
\email{jack.thompson@uwa.edu.au}

\keywords{Nonlocal minimal surfaces, rigidity results, half-space theorem}

\subjclass[2020]{53A10, 49Q05, 35R11, 47G20, 53C24}


\dedicatory{}

\begin{abstract}
We establish a half-space theorem \`a la Hoffman and Meeks for nonlocal minimal surfaces. Differently from the classical case, our result holds in every dimension.
\end{abstract}

\maketitle

\section{Introduction}

\noindent

\subsection{Minimal surfaces and the classical half-space theorem}
The study of minimal surfaces is a foundational research topic at the intersection of geometry, topology, measure theory, and partial differential equations. From the perspective of De Giorgi's theory of sets of finite perimeter, a minimal (hyper)surface is the boundary of a set that is stationary for the perimeter functional---a property that can be equivalently characterised as having zero mean curvature at sufficiently regular points. Minimal surfaces appear frequently in shape optimisation problems and have wide ranging applications in many applied disciplines, including biology, materials science, engineering, and even architecture---see, e.g.,~\cite{P17,E13,SCFQM26} for more details.

Over the last~250 years, mathematicians have investigated minimal surfaces far and wide, focusing on producing explicit examples, understanding their regularity features, and obtaining classification results. In~1990, Hoffman and Meeks~\cite{HM90} obtained a particularly elegant rigidity result, usually referred to as the \emph{half-space theorem}. In most simple terms (and in a language coherent with that of the remainder of this paper), their result can be stated as follows---see~\cite[Theorem~1]{HM90} or~\cite[Theorem~8.15]{CM11} for other more general formulations.

\begin{thm}[Hoffman and Meeks~\cite{HM90}] \label{thm:HM}
    Let~\(E\subset \R^3\) be an open set contained in a half-space and whose boundary is a smooth connected minimal surface. Then,~\(E\) is a half-space. 
\end{thm}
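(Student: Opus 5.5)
We follow the original strategy of Hoffman and Meeks: compare \(\partial E\) with catenoids. First we interpret the hypothesis in surface terms. Let \(\Sigma=\partial E\). Since \(\Sigma\) is a smooth connected minimal surface bounding the open set \(E\), it is properly embedded, and so is the boundary surface in the sense of Theorem~\ref{thm:HM}. After a rigid motion we may assume \(E\subset\{x_3>0\}\). Set
\[
t_0:=\inf_{\Sigma} x_3\ \ge 0 .
\]
The goal is to show that \(\Sigma\) is the plane \(\{x_3=t_0\}\). Granting this, \(E\) is an open set with boundary \(\{x_3=t_0\}\) contained in \(\{x_3>0\}\). Hence \(E=\{x_3>t_0\}\), which is a half-space.

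\textbf{Step 1 (the infimum is attained).} Suppose \(t_0\) is attained at some point \(p\in\Sigma\). Then \(\Sigma\) lies above the plane \(\{x_3=t_0\}\) and touches it at \(p\). Both surfaces have zero mean curvature, so the strong maximum principle (Hopf's boundary-point version for the minimal surface equation, written locally as graphs over the tangent plane) shows that \(\Sigma\) coincides with the plane near \(p\). The set of contact points is then open and closed in the connected surface \(\Sigma\), so \(\Sigma=\{x_3=t_0\}\).

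\textbf{Step 2 (the infimum is not attained).} This is the main step, and here the catenoid barrier enters. Translate vertically so that \(t_0=0\), and suppose \(\Sigma\cap\{x_3=0\}=\emptyset\). Then \(\Sigma\subset\{x_3>0\}\) and \(\Sigma\) comes arbitrarily close to \(\{x_3=0\}\). Since \(\Sigma\) is properly embedded, we may choose a small ball \(B\) centred on \(\{x_3=0\}\), or slightly below it, that is disjoint from \(\Sigma\).

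Consider the lower half of a catenoid with axis vertical, neck a circle of radius \(\varepsilon\) inside \(B\), and neck placed just below the plane \(\{x_3=0\}\). This half-catenoid \(C_\varepsilon\) grows only logarithmically. On any fixed compact set of \(\{x_3>0\}\), as \(\varepsilon\to0\) with suitable rescaling, \(C_\varepsilon\) converges to the punctured plane \(\{x_3=0\}\setminus\{0\}\).

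We use a one-parameter family \(C_{\varepsilon,s}\), vertical translates or dilations, and slide it upward toward \(\Sigma\). Its boundary circle stays inside \(B\), so it never meets \(\Sigma\). Because \(\Sigma\) approaches height \(0\) while \(C_\varepsilon\) exceeds any positive height far out, for small \(\varepsilon\) some member of the family must intersect \(\Sigma\). Take the first contact. It cannot occur at infinity, since \(\Sigma\) is proper and \(C_{\varepsilon}\) is a graph that diverges logarithmically. It cannot occur on the boundary circle, since that circle lies in \(B\). So it is an interior tangency, with one minimal surface lying on one side of the other. The maximum principle of Step 1 then gives \(\Sigma=C_{\varepsilon,s}\), which is impossible: \(\Sigma\) has no boundary, and \(C_{\varepsilon,s}\) goes below \(x_3=0\) on its unbounded end, which would contradict \(\Sigma\subset\{x_3>0\}\).

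The delicate part is arranging the sweep so that the contact really occurs at a finite interior point. Concretely, this means excluding contacts "at infinity" and checking that the initial catenoid is disjoint from \(\Sigma\). The plan for this is as follows. First choose \(C_{\varepsilon}\) at low height with its end lying under \(\Sigma\) on a large compact region, which is possible because \(\Sigma\) has positive distance from the plane on compact sets. Then use properness together with the logarithmic growth, which is exactly where dimension three is needed (in higher dimensions catenoids are bounded in height, which is why the classical theorem fails there). From this we conclude that the contact set stays in a compact region. Steps 1 and 2 together show that \(\Sigma\) is a plane, which yields the theorem.
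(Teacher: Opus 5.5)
\textbf{Verdict.} Your overall scheme is the Hoffman--Meeks argument that the paper sketches in its introduction: reduce to the case where $\inf_\Sigma x_3$ is not attained, then reach a contradiction by a first-contact argument against half-catenoids and the strong maximum principle. Step~1, the choice of the ball $B$, and the final contradiction are fine. However, the step that \emph{forces} a contact is justified incorrectly, and it is the heart of the proof.

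\textbf{The gap.} A lower half-catenoid opens downward. It is the graph of $h-\varepsilon\cosh^{-1}(|x'|/\varepsilon)$ over $\{|x'|\ge\varepsilon\}$, so far out it drops below every height; it does not ``exceed any positive height''. With the neck just below $\{x_3=0\}$ it lies entirely in $\{x_3<0\}$ and never meets $\Sigma$. Likewise, if the flattening limit is the plane $\{x_3=0\}=\{x_3=\inf_\Sigma x_3\}$, as you write, nothing forces a contact, because that plane does not meet $\Sigma$. Switching to an upward-opening half-catenoid does not help. $\Sigma$ approaches height $0$ along a sequence going to infinity, where such a barrier is arbitrarily high, so it never has $\Sigma$ on one side.

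\textbf{The missing idea.} You need the $\delta$-shift from the paper's sketch. The neck must reach a height $h>0=\inf_\Sigma x_3$ while staying in the cylinder around the axis that $\Sigma$ avoids; this is equivalent to translating $\Sigma$ down by $\delta$, as the paper translates $E$ up. Contact is then forced by \emph{flattening}:
\begin{itemize}
\item Fix such an $h$ and pick $q\in\Sigma$ with $q_3<h$.
\item Choose $\varepsilon$ \emph{after} $q$, so small that $\varepsilon\cosh^{-1}(|q'|/\varepsilon)<h-q_3$. This is possible since $\varepsilon\log(1/\varepsilon)\to0$, and it is where the logarithmic, i.e.\ sublinear, growth is used.
\item The catenoid with neck at height $h$ then passes strictly above $q$. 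The member with neck below the plane lies below $\Sigma$. Sliding up therefore yields a first contact.
\end{itemize}

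\textbf{Roles of the two growth properties.} You attribute compactness of the contact set to the logarithmic growth, but it comes from the divergence of the end to $-\infty$: only the compact part of the catenoid above $\{x_3=0\}$ can meet $\Sigma$. This divergence is exactly what fails for the bounded catenoids in higher dimensions. Sublinearity is what makes the catenoids flatten and hence forces the contact.

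\textbf{A minor imprecision.} At an interior tangency the maximum principle does not give $\Sigma=C_{\varepsilon,s}$. An open--closed argument on the connected annulus $\mathrm{int}\,C_{\varepsilon,s}$ gives $\mathrm{int}\,C_{\varepsilon,s}\subset\Sigma$. That inclusion still contradicts $\Sigma\subset\{x_3>0\}$, so your conclusion survives.
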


Theorem~\ref{thm:HM} poses significant limitations on how a (regular) minimal surface can sit in the three-dimensional space. Remarkably, this property is actually particular to the three-dimensional space. Indeed, in higher dimensions there exist smooth minimal hypersurfaces---generalised catenoids, for instance---which are trapped between two parallel hyperplanes. Our goal in this note is to establish an analogous result to Theorem~\ref{thm:HM} for \emph{nonlocal} minimal surfaces, a different class of boundaries that we now proceed to describe.

\subsection{Nonlocal minimal surfaces}
In their highly influential 2010 paper~\cite{CRS10}, Caffarelli, Roquejoffre, and Savin introduced a notion of nonlocal or fractional perimeter. Broadly speaking, fractional perimeters are a family (indexed over a parameter~$s \in (0, 1)$) of nonlocal analogues of the classical perimeter, which take into account long-range interactions between a set and its complement, instead of infinitesimal ones. Since their inception, a number of works have been devoted to the study of minimisers of fractional perimeters---sets that we shall call here \emph{minimising nonlocal minimal surfaces}. As a result of this effort, great progress has been made in understanding their regularity and rigidity properties---without pretending to offer an exhaustive list, see the papers~\cite{CRS10,SV13,BFV14,FV17,CSV19,CC19,CCS20,CDSV23} as well as the recent surveys~\cite{C22,DV23,S24}.

A minimiser~$E \subset \R^{n + 1}$ of the fractional~$s$-perimeter satisfies the equation
\begin{equation} \label{Hs=0}
\mathrm H_{s,E} = 0 \quad \mbox{on } \partial E,
\end{equation}
where, for~$x \in \partial E$,
\begin{align*}
\mathrm H_{s, E}(x) \coloneqq & \,\, \PV \int_{\R^{n + 1}} \frac{\chi_{\R^{n + 1} \setminus E}(y) - \chi_E(y)}{|x - y|^{n + 1 + s}} \dd y \\
= & \, \lim_{\varepsilon \rightarrow 0^+} \int_{\R^{n + 1} \setminus B_\varepsilon(x)} \frac{\chi_{\R^{n + 1} \setminus E}(y) - \chi_E(y)}{|x - y|^{n + 1 + s}} \dd y.
\end{align*}
In analogy with the theory of classical minimal surfaces, the latter quantity is often called \emph{fractional~$s$-mean curvature}. The Euler-Lagrange equation~\eqref{Hs=0} has to be understood in a suitable viscosity sense, which becomes an actual pointwise identity at points around which~$\partial E$ is sufficiently regular. Sets~$E$ that satisfy~\eqref{Hs=0}---whose boundaries we shall simply address to as \emph{nonlocal~$s$-minimal surfaces}---are not necessarily minimisers of the~$s$-perimeter. A few examples of non-minimising nonlocal minimal surfaces have been constructed in the literature, mostly in~\cite{CDD16}, by Cinti, D\'avila, and del Pino, and~\cite{DDW18}, by D\'avila, del Pino, and Wei. In the very recent~\cite{CFS23}, Caselli, Florit-Simon, and Serra extended the study of nonlocal minimal surfaces to general closed Riemannian manifolds and established the existence of infinitely many of them, thus proving a fractional version of Yau's conjecture. In comparison with the results available for minimisers, however, much less is known at the moment about them.

\subsection{A half-space theorem for nonlocal minimal surfaces}
In this paper, we provide a novel rigidity result for nonlocal minimal surfaces, which acts as a fractional counterpart of the classical half-space theorem---i.e., Theorem~\ref{thm:HM} above. In its simplest form, it can be stated is as follows.

\begin{thm} \label{thm:s-halfspace}
Let~$n \in \N$ and~$s \in (0, 1)$. Let~\(E\subset \R^{n+1}\) be an open set contained in a half-space and whose boundary is a smooth nonlocal~$s$-minimal surface.
Then,~\(E\) is a half-space. 
\end{thm}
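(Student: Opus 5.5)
Without loss of generality, assume $E\subset\{x_{n+1}<0\}$ and, discarding the trivial case, that $E\neq\emptyset$. Set $t_0:=\sup_{E} x_{n+1}\le 0$. The idea is to slide the half-space $H_t:=\{x_{n+1}<t\}$ down from above until it touches $E$, and then compare fractional mean curvatures at a contact point. The strong comparison (maximum) principle for the nonlocal mean curvature is a genuinely \emph{global} statement: if $E\subseteq F$, both boundaries are regular near a common point $x_0\in\partial E\cap\partial F$, and $\mathrm H_{s,E}(x_0)=\mathrm H_{s,F}(x_0)$, then
\[
0=\mathrm H_{s,E}(x_0)-\mathrm H_{s,F}(x_0)=2\int_{\R^{n+1}}\frac{\chi_{F\setminus E}(y)}{|x_0-y|^{n+1+s}}\dd y ,
\]
which forces $|F\setminus E|=0$. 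Because $E$ is open with smooth boundary, this gives $E=F$. Unlike the local case, no connectedness of $\partial E$ or low-dimensional argument is required, which explains why the result holds in every dimension.

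\textbf{Case 1: $t_0$ is attained.} Suppose there is a point $x_0\in\partial E$ with $x_{0,n+1}=t_0$. Then $E\subseteq H_{t_0}$ and $x_0\in\partial E\cap\partial H_{t_0}$. The tangent planes at $x_0$ coincide, so the principal value defining $\mathrm H_{s,E}(x_0)-\mathrm H_{s,H_{t_0}}(x_0)$ converges absolutely, and the identity above applies. Since $\mathrm H_{s,H_{t_0}}\equiv 0$ and $\mathrm H_{s,E}(x_0)=0$, we obtain $E=H_{t_0}$.

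\textbf{Case 2: the supremum is only approached at infinity.} This is the main obstacle. Pick points $x_k\in\partial E$ with $(x_k)_{n+1}\to t_0$ and $|x_k'|\to\infty$, and translate horizontally, $E_k:=E-(x_k',0)$. I would then pass to a limit $E_k\to E_\infty$ in $L^1_{\loc}$, with the origin-based points converging to $x_\infty=(0,t_0)$. The limit satisfies $E_\infty\subseteq H_{t_0}$, and one wants $x_\infty\in\partial E_\infty$ and $\mathrm H_{s,E_\infty}=0$ in the viscosity sense, by stability of viscosity solutions under $L^1_{\loc}$ convergence, since the tails are controlled by the uniform kernel integrability away from the diagonal. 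The viscosity form of the comparison principle, with $H_{t_0}$ serving as a smooth touching set, would then give $E_\infty=H_{t_0}$ up to null sets.

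Converting this back into information about $E$ needs uniform control, and I see two ways to get it. The first is a quantitative version of the identity: for $x_k$ and the translate $F_k:=H_{t_0}$,
\[
2\int\frac{\chi_{H_{t_0}\setminus E}(y)}{|x_k-y|^{n+1+s}}\dd y=\mathrm H_{s,E}(x_k)-\mathrm H_{s,H_{t_0}}(x_k)+(\text{error from } t_0-(x_k)_{n+1}).
\]
This would show that $E$ fills $H_{t_0}$ in larger and larger regions near $x_k$, but it requires uniform curvature bounds on $\partial E$ near $x_k$, which smoothness alone does not provide. The second route avoids that issue and is the one I would try first. Compare $E$ not with half-spaces but with a fixed bounded-below perturbation: a translate of a set $F=H_{t}\cup B$, or of a set whose boundary curves slightly upward, chosen so that $\mathrm H_{s,F}<0$ near its top. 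Sliding $F$ vertically from above produces a first interior contact point at finite distance, and there $0=\mathrm H_{s,E}(x_0)\le\mathrm H_{s,F}(x_0)<0$, which is a contradiction unless $E\supseteq H_{t_0}$ up to null sets. Making this barrier construction precise, namely finding sets lying above $E$ that have strictly signed nonlocal mean curvature at their extremal points while still guaranteeing contact at a finite point, is the step I expect to be hardest.
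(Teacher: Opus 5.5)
Your Case~1 is correct and matches the paper's first step (Proposition~\ref{prop:compprinc} prevents $\partial E$ from touching $\partial\R^{n+1}_-$). However, Case~2 is the entire content of the theorem, and none of your three ideas closes it.

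\textbf{The compactness argument.} It cannot give a contradiction even if every step works. First, $x_\infty\in\partial E_\infty$ is not guaranteed, since boundary points can be lost under $L^1_{\loc}$ convergence without uniform density estimates. More importantly, $E_\infty=H_{t_0}$ is exactly what any set whose boundary approaches $\partial H_{t_0}$ at infinity produces, so it carries no contradiction by itself. Likewise, your quantitative identity would only say that $E$ fills $H_{t_0}$ near $x_k$.

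\textbf{The barrier route.} Here the comparison is backwards. If $E\subseteq F$ and $x_0\in\partial E\cap\partial F$, then $\chi_{\R^{n+1}\setminus E}-\chi_E\ge\chi_{\R^{n+1}\setminus F}-\chi_F$. Hence $\mathrm H_{s,F}(x_0)\le\mathrm H_{s,E}(x_0)=0$ holds automatically, and $\mathrm H_{s,F}(x_0)<0$ yields nothing. What you need is a set above $E$ with $\mathrm H_{s,F}>0$ at the contact point, whose boundary satisfies two conditions:
\begin{itemize}
\item it tends to $+\infty$ at infinity, so that after pushing $E$ up by some $\delta>0$ (to gain room near a point of $\partial H_{t_0}$, using Case~1) the optimal rescaling touches at a finite point;
\item it does so sublinearly, so that if the optimal rescaling parameter is $0$, then $E\subset H_{t_0-\delta}$, contradicting the choice of $t_0$.
\end{itemize}
A set like $H_t\cup B$ fails the first condition, since its flat part again touches only at infinity. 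With the sign corrected and $F=\{x_2<|x_1|^\alpha\}$, $\alpha\in(0,s)$, which has positive $s$-mean curvature outside a ball by Proposition~\ref{prop:barrier}\ref{barriercasei}, your barrier route becomes precisely the paper's proof for $n=1$.

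\textbf{The case $n\ge2$.} Here no such barrier exists. The linearised operator $(-\Delta)^{\frac{1+s}{2}}$ on $\R^n$ is non-parabolic, and for $F_{n,\alpha}=\{x_{n+1}<|x'|^\alpha\}$ one only has $|\mathrm H_{s,F_{n,\alpha}}(x)|\le C|x|^{\alpha-1-s}$; the leading term is in fact negative. This is the missing idea. The paper slides $\varepsilon F_{n,\alpha}$ onto $E_j=E+\frac1j e_{n+1}$ and obtains contact points $p_j$, which must diverge by your Case~1. It then turns $F_{\varepsilon_j}$ into a strict supersolution at $p_j$ by removing a region $D_j\subset F_{\varepsilon_j}\setminus E_j$, away from $p_j$, with $\int_{D_j}|y-p_j|^{-n-1-s}\dd y\ge c\,|p_j|^{\alpha-1-s}$. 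This beats the $O(\varepsilon_j^{1-\alpha}|p_j|^{\alpha-1-s})$ curvature of $F_{\varepsilon_j}$.

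Producing $D_j$ requires a dichotomy. Consider the nearly flat cone $\{x_{n+1}<-\ell_j|x'|\}$, with $\ell_j\to0$, truncated at scale $\sim|p_j|$ and placed at $p_j$.
\begin{itemize}
\item If it is contained in $E_j$, then $\varepsilon$-regularity at the diverging scale $|p_j|$ forces $E$ to be a half-space.
\item If not, the complement of $E_j$ supplies the needed mass in the cone. Either the cone avoids $E_j$ entirely, or one uses thick density estimates for the complement at some $q_j\in\partial E_j$ in the cone. These come from hypothesis~\ref{tech-a} of Theorem~\ref{thm:techthm2} or, for smooth $\partial E$, from Proposition~\ref{prop:densestfors-stat}, after a sliding argument with ice-cream-cone sets that produces an interior touching ball at such a point.
\end{itemize}
So the global comparison principle explains why connectedness of $\partial E$ is not needed. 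It is this quantitative nonlocal correction, not the comparison principle, that makes the result hold in every dimension.
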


Theorem~\ref{thm:s-halfspace} is among the first rigidity results for non-minimising~$s$-minimal surfaces. In~\cite{CV20}, Valdinoci and the first author obtained what is perhaps the closest result available in the literature, by showing that hyperplanes are the only nonlocal minimal surfaces that can be trapped inside a slab---or, more generally, between the graphs of two functions that are strictly sublinear at infinity. Notice that Theorem~\ref{thm:s-halfspace} here improves upon the simplest, ``rigidity-inside-a-slab'' formulation of~\cite[Theorem~1]{CV20}, by making its assumption one-sided. Other results in this direction are~\cite[Theorem~1.6]{CC19}, by Cabr\'e and the first author, which established the flatness of nonlocal minimal graphs growing at most linearly at infinity, and~\cite[Theorem~1.3]{CFL21}, by the first author, Farina, and Lombardini, that weakened said growth assumption by requiring it only from either above or below. Related results are also~\cite[Theorem~1.4]{FV19}, by Farina and Valdinoci, and~\cite[Theorem~8.1]{CC19}, which provided the flatness of globally Lipschitz nonlocal minimal graphs. We point out that, apart from~\cite{CV20}, all these results pertain to fractional minimal \emph{graphs}, which, under reasonable regularity assumptions, are known to be minimisers of the fractional perimeter---see~\cite[Corollary~2.5]{C20} or~\cite[Theorem~1.10]{CL21}. Note that Theorem~\ref{thm:s-halfspace} is already known for \emph{minimising} nonlocal minimal surfaces and can be proved by a relatively simple blow-down argument---see~\cite[Lemma~8.3]{DSV20} or~\cite[Theorem~1.6]{CFL21}.

Differently from the local Theorem~\ref{thm:HM}, our Theorem~\ref{thm:s-halfspace} does not require the boundary of the set~$E$ to be connected. This is a common feature of nonlocal problems, motivated by the fact that, unlike its classical counterpart, the nonlocal mean curvature \emph{sees} disconnected components of~$\partial E$---the simplest repercussion of this feature is perhaps represented by the fact that the boundary of a slab is a classical minimal surface, but not a fractional one. More interestingly, Theorem~\ref{thm:s-halfspace} is valid in every dimension. This is in sharp contrast with Theorem~\ref{thm:HM}, which does not hold when the dimension of the ambient space is larger than three. The general rationale behind this difference should also be ascribed to the nonlocality of the fractional mean curvature, but the specific technical reasons are better understood by looking directly at the proof of Theorem~\ref{thm:s-halfspace}, which we now outline.

\subsection{Strategy of the proof}

The general strategy that we employ to prove Theorem~\ref{thm:s-halfspace} is a touching argument via a suitable barrier, similar, at its core, to the one devised by Hoffman and Meeks for Theorem~\ref{thm:HM}.

The proof of Theorem~\ref{thm:HM} essentially goes as follows. Suppose that~$E$ is contained in the lower half-space~$\R^3_- = \R^2 \times (-\infty, 0)$ and that no other half-space of the form~$\R^2 \times (-\infty, a)$, with~$a < 0$, contains~$E$. By the strong comparison principle for minimal surfaces,~$\partial E$ cannot touch the boundary of~$\R^3_-$. Hence, if~$E_\delta = E + (0, 0, \delta)$ is an upward translation of~$E$ by a vector of fixed small length~$\delta > 0$ then~\(E_\delta\) still lies inside~$\R^3_-$ in a neighbourhood of the origin, and lies below the upper half~$C_1$ of a catenoid globally in~$\R^3$. This creates the necessary room to compare~$\partial E_\delta$ with the rescalings~$C_\varepsilon = \varepsilon C_1$ of~$C_1$, for~$\varepsilon \in (0, 1]$. One then denotes as~$\varepsilon_0 \in [0, 1]$ the infimum of the values~$\varepsilon$ for which~$\partial E_\delta$ lies below~$C_\varepsilon$, verifies that, if positive, such a~$\varepsilon_0$ is attained as a minimum, and shows that it cannot be positive since, otherwise,~$\partial E_\delta$ would be touched on one side by~$\partial C_{\varepsilon_0}$, in contradiction once again with the strong comparison principle. Hence, it necessarily holds that~$\varepsilon_0 = 0$, which means that~$E_\delta$ lies below every rescaling of the half-catenoid~$C_1$---i.e., that~$E_\delta \subset \R^3_-$. Going back to the original set~$E$, this gives that~$E \subset \R^2 \times (-\infty, - \delta)$, in contradiction with the optimality of the inclusion~$E \subset \R^3_-$.

The reason why this beautiful simple proof works can be found in the features of the half-catenoid~$C_1$. Of course, it is a (super)solution to the minimal surface equation---a condition needed to apply the strong comparison principle and, ultimately, essential for the general feasibility of a touching argument. Then,~$C_1$ is a graph of a function~$\varphi$ defined outside of a neighbourhood of the origin in the base plane~$\R^2$ and diverging at infinity---a property that gives the necessary compactness to warrant that the infimum~$\varepsilon_0$ is attained. Finally, the fact that~$\varphi$ diverges logarithmically (thus, in a strictly sublinear fashion) at infinity is crucial to have that the blow-down of~$C_1$ is a plane and thus to conclude that~$E_\delta \subset \R^3_-$ if~$\varepsilon_0 = 0$.

When one tries to apply this argument to nonlocal minimal surfaces, several difficulties arise. First, an analogue of the catenoid for the~$s$-mean curvature has been constructed in~\cite{DDW18}, but only for values of~$s$ close to~$1$. Furthermore, it does not have all the required properties---indeed it does diverge at infinity, but exactly linearly. Most importantly, when one cuts the~$s$-catenoid in half and only considers its upper fold, the newly obtained surface is no longer a (super)solution of~\eqref{Hs=0}---this is, once again, a reflection of the nonlocality of the~$s$-mean curvature.

To extend the proof of Theorem~\ref{thm:HM} to nonlocal minimal surfaces, one has then to obtain a supersolution in a different way. The underlying reason for which the~$n$-dimensional catenoid works when~$n = 2$ but fails for every~$n \ge 3$ (and, somewhat poetically, actually provides a counterexample to the theorem in this regime!) is linked to \emph{parabolicity}. This notion has been introduced in the context of general Riemannian manifolds and has several equivalent characterisations, including the recurrence of the Brownian motion, the non-existence of non-constant positive superharmonic functions, and the positiveness of the capacity of compact sets---see~\cite{G99} for an extensive account. In the specific setting of the Euclidean space, parabolicity can be detected by the behaviour at infinity of the fundamental solution of the Laplacian. Since the linearisation of the mean curvature operator involves the Laplacian, minimal surfaces are in some sense akin to the graphs of harmonic functions. Indeed, the~$n$-catenoid behaves at infinity like the fundamental solution of the Laplacian---in particular, they both diverge logarithmically when~$n = 2$ (the parabolic regime for the Laplacian) and stay bounded when~$n \ge 3$ (the non-parabolic one). See also~\cite{RSS13,CMMR22,CM24} for broader formulations of the half-space theorem in the contexts of Riemannian manifolds and metric measure spaces, where parabolicity enters the proof in a different way.

Moving to the nonlocal setting, the linearisation of the~$s$-mean curvature is driven by the operator~$(-\Delta)^{\frac{1 + s}{2}}$---i.e., the fractional Laplacian of order~$\sigma \coloneqq 1 + s \in (1, 2)$---over the space~$\R^n$. The fundamental solution of the~$\frac{\sigma}{2}$-Laplacian is (a positive multiple of) the function~$\mbox{sgn}(n - \sigma) |x'|^{\sigma - n}$ when~$n \ne \sigma$ and~${- \log |x'|}$ when~$n = \sigma = 1$ (here and in the rest of the article,~$x'$ denotes a point in the base space~$\R^n$). Accordingly, the parabolic regime for these operators is achieved when~$n = 1$ and $\sigma \ge 1$. As the latter condition is always satisfied in our geometric setting, when~$n = 1$ we can obtain a set which is the subgraph of positive function diverging sublinearly at infinity---the power~$|x'|^\alpha$, for~$\alpha \in (0, s)$---and that has positive nonlocal mean curvature outside of a ball of fixed large radius---see Proposition~\ref{prop:barrier}\ref{barriercasei}. Through this comparison set, we can replicate the touching argument of Hoffmann and Meeks almost verbatim and establish Theorem~\ref{thm:s-halfspace} in a rather straightforward way. In light of the bare-bones nature of this approach, Theorem~\ref{thm:s-halfspace} for~$n = 1$ actually holds under virtually no regularity assumptions on~$\partial E$ and for mere viscosity (sub)solutions of the nonlocal mean curvature equation---see Section~\ref{sec:1dproof} and, in particular, Theorem~\ref{thm:techthm1} for a precise statement and its proof.

The non-parabolic case~$n \ge 2$ is obviously more challenging and requires genuinely nonlocal ideas which are not available for classical minimal surfaces. As a starting point, we still consider the subgraph~$F$ of the power function~$|x'|^\alpha$. This is no longer a supersolution, but its fractional~$s$-mean curvature goes to zero relatively quickly as~$|x'|$ diverges, at least as~$|x'|^{\alpha - 1 - s}$---see Proposition~\ref{prop:barrier}\ref{barriercaseii}. This fast decay rate allows us to correct~$F$ by removing from it a suitable region~$D$ and thus turn it into an actual supersolution~$\widetilde{F} \coloneqq F \setminus D$. Of course, the set~$D$ has to lie outside of the~$s$-minimal set~$E$, away from its touching point with~$\partial F$, and must determine a sufficiently large alteration to the fractional mean curvature in order to make this positive. To find this region~$D$, we devise a strategy that can be seen as a quantitative and more technically involved version of the dichotomy between ``graph-type'' and ``catenoid-type'' nonlocal minimal surfaces identified in the proof of~\cite[Theorem~1]{CV20}---see the forthcoming Section~\ref{sec:ndproof} and, in particular, Steps~3-5 there for the implementation of this idea. A key tool used for this argument is represented by a couple of novel density estimates---for~\emph{stable} nonlocal minimal surfaces and for mere stationary ones at points which satisfy a touching ball condition---that we collect in Subsection~\ref{subsec:density}. As for the case~$n = 1$, the regularity assumptions in Theorem~\ref{thm:s-halfspace} for~$n \ge 2$ can be considerably weakened, albeit not completely removed---see Theorem~\ref{thm:techthm2} (and the comment preceding it) for a sharper but more technical statement.

\subsection*{Acknowledgements}
The first author has been supported by the GNAMPA-INdAM project ``Equazioni nonlocali e nonlineari: alcune questioni di esistenza, rigidit\`a e regolarit\`a'' CUP E53C25002010001 (Italy), by the PRIN project 20229M52AS\_004 ``Partial differential equations and related geometric-functional inequalities'' (Italy), and by the grants PID2021-123903NB-I00 and RED2022-134784-T funded by ERDF ``A way of making Europe'' (European Union) and by MCIN/AEI/10.13039/501100011033 (Spain).

\section{A few auxiliary results}

\noindent
We collect here several preliminary results, including definitions of viscosity solutions for the nonlocal minimal surface equation, a strong comparison principle, the computation of a suitable barrier, and a couple of density estimates. Accordingly, this section is divided into four subsections, each pertaining to one such matter.

\subsection{Preliminaries on viscosity solutions}

In this subsection we introduce our definition of viscosity solution and show its equivalence with the one usually adopted in the framework of nonlocal minimal surfaces. From now on and unless otherwise specified, inclusions between sets are understood to hold up to sets of measure zero, i.e.,~$F \subset E$ means that~$|{E \setminus F}| = 0$. In particular, two sets~$E$ and~$F$ are equal if their symmetric difference~$E \Delta F$ has measure zero.

We begin by stating the definition of viscosity solution that we shall adopt most of the times in this paper.

\begin{defn} \label{def:viscsol}
Let~$E \subset \R^{n + 1}$ be a measurable set and~$x \in \partial E$. We say that:
\begin{itemize}[leftmargin=15pt]
\item $\mathrm H_{s, E} \ge 0$ at~$x$ in the viscosity sense if~$|{B_r(x) \setminus E}| > 0$ for all~$r > 0$ and~$\mathrm H_{s, F}(x) \ge 0$ for every set~$F \subset E$ such that~$x \in \partial F$ and~$\partial F$ is of class~$C^2$ in a neighbourhood of~$x$.
\item $\mathrm H_{s, E} \le 0$ at~$x$ in the viscosity sense if~$|{B_r(x) \cap E}| > 0$ for all~$r > 0$ and~$\mathrm H_{s, F}(x) \le 0$ for every set~$F \supset E$ such that~$x \in \partial F$ and~$\partial F$ is of class~$C^2$ in a neighbourhood of~$x$.
\item $\mathrm H_{s, E} = 0$ at~$x$ in the viscosity sense if both~$\mathrm H_{s, E} \ge 0$ and~$\mathrm H_{s, E} \le 0$ at~$x$ in the viscosity sense.
\end{itemize}
\end{defn}

The above definition of viscosity solutions of~$\mathrm H_{s, E} = 0$ is rather classical in spirit, as it offloads the verification of the inequalities involving the~$s$-mean curvature onto smooth hypersurfaces that touch~$\partial E$ from the appropriate side. It has been previously adopted by the first author in~\cite{CV20}, as well as in~\cite{CL21} in the context of nonlocal minimal graphs. Moreover, it is easy to see that, when~$\partial E$ is of class~$C^2$ in a neighbourhood of~$x$, then this notion is equivalent to the pointwise one.

In the literature about nonlocal minimal surfaces, the following definition of viscosity solution has been predominantly embraced.

\begin{defn} \label{def:viscsol2}
Let~$E \subset \R^{n + 1}$ be a measurable set and~$x \in \partial E$. We say that:
\begin{itemize}[leftmargin=15pt]
\item $E$ is a viscosity supersolution of~$\mathrm H_{s, E} = 0$ at~$x$ if~$|{B_r(x) \setminus E}| > 0$ for all~$r > 0$ and~$\mathrm H_{s, E}(x) \ge 0$ whenever~$E$ is touched by a ball from the interior at~$x$.
\item $E$ is a viscosity subsolution of~$\mathrm H_{s, E} = 0$ at~$x$ if~$|{B_r(x) \cap E}| > 0$ for all~$r > 0$ and~$\mathrm H_{s, E}(x) \le 0$ whenever~$E$ is touched by a ball from the exterior at~$x$.
\item $E$ is a viscosity solution of~$\mathrm H_{s, E} = 0$ at~$x$ if it is both a viscosity super- and subsolution.
\end{itemize}
\end{defn}

Among the many references, this definition has been used (implicitly) in~\cite[Theorem~5.1]{CRS10},~\cite[Theorem~4.1]{C20} and (explicitly, in a more general framework) in~\cite[Definition~1.8]{M25}. It is even simpler (in comparison to the previous Definition~\ref{def:viscsol}) to check that it agrees with the pointwise notion for~$\mathrm H_{s, E}(x) = 0$ when the hypersurface~$\partial E$ is~$C^2$ around~$x$. As a minor drawback, to even make sense of the definition the reader is required to know a priori that~$\mathrm H_{s, E}(x)$ is well-defined in the principal value sense (albeit possibly infinite) whenever~$E$ is touched by a ball from either side at~$x$---see~\cite[Section~4]{C20} for a proof of this fact.

We conclude this brief subsection by showing that Definitions~\ref{def:viscsol} and~\ref{def:viscsol2} are actually equivalent. In order to see this, it is clearly enough to prove the following statement.

\begin{prop} \label{prop:visc-equiv}
Let~$E \subset \R^{n + 1}$ be a measurable set and~$x \in \partial E$. Assume that~$|{B_r(x) \setminus E}| > 0$ for all~$r > 0$. Then,~$\mathrm H_{s, E} \ge 0$ at~$x$ in the viscosity sense of Definition~\ref{def:viscsol} if and only if~$E$ is a viscosity supersolution of~$\mathrm H_{s, E} = 0$ at~$x$ in the sense of Definition~\ref{def:viscsol2}.
\end{prop}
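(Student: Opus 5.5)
The two implications are proved separately. The easier one is that Definition~\ref{def:viscsol} implies Definition~\ref{def:viscsol2}. Suppose~$E$ is touched from the interior at~$x$ by a ball~$B \subset E$ with~$x \in \partial B$. The ball itself is an admissible test set~$F = B$, but applying Definition~\ref{def:viscsol} to it only yields~$\mathrm H_{s,B}(x) \ge 0$, which is much weaker than what we need. Instead we use the sets
\[
F_\rho \coloneqq (E \setminus B_\rho(x)) \cup (B \cap B_\rho(x)), \qquad \rho > 0.
\]
Each~$F_\rho$ is contained in~$E$, has~$x$ on its boundary, and near~$x$ its boundary coincides with~$\partial B$, so it is~$C^2$ there. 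Definition~\ref{def:viscsol} therefore gives~$\mathrm H_{s,F_\rho}(x) \ge 0$. Now split~$\mathrm H_{s,F_\rho}(x)$ into the contributions from~$B_\rho(x)$ and from its complement:
\begin{itemize}
\item the outer part is exactly the outer part of~$\mathrm H_{s,E}(x)$;
\item the inner part equals the principal value over~$B_\rho(x)$ for the ball~$B$, which is~$O(\rho^{1-s})$ by the~$C^{1,1}$ cancellation for a ball.
\end{itemize}
Since~$E \supset B$ near~$x$, the integrand~$\chi_{E^c}-\chi_E$ lies below~$\chi_{B^c}-\chi_B$. Hence the inner contribution of~$E$ over~$B_\rho(x)\setminus B_\varepsilon(x)$ is a monotone-in-$\varepsilon$ quantity bounded above by~$C\rho^{1-s}$, and the principal value of~$\mathrm H_{s,E}(x)$ exists in~$[-\infty,\infty)$, as recalled from~\cite[Section~4]{C20}. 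This sign needs care; to be safe I will simply note that~$\mathrm H_{s,E}(x)$ is defined and equals the limit of the truncated integrals. Combining the pieces gives
\[
\mathrm H_{s,E}(x) \ge \mathrm H_{s,F_\rho}(x) - C\rho^{1-s} - (\text{inner part of } E \text{ over } B_\rho(x)) + (\text{same for } B).
\]
This is not yet the right bookkeeping. It is cleaner to write the difference directly:
\[
\mathrm H_{s,E}(x) - \mathrm H_{s,F_\rho}(x) = \int_{B_\rho(x)} \frac{(\chi_{E^c}-\chi_E) - (\chi_{B^c}-\chi_B)}{|x-y|^{n+1+s}}\,\dd y .
\]
Here the integrand is~$\le 0$, so this route gives the wrong direction. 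I would therefore reverse the order of the argument and prove the inequality for~$E$ by approximation from outside instead. Take test sets~$F \subset E$ that agree with~$E$ outside~$B_\rho(x)$ and with~$B$ inside~$B_\rho(x)$; then~$\mathrm H_{s,E}(x) \ge \mathrm H_{s,F}(x)$ would require~$F \supset E$, which contradicts~$F \subset E$. The fix is to note that inside~$B_\rho(x)$ the deviation is
\[
-2\int_{B_\rho(x)\cap(E\setminus B)} \frac{\dd y}{|x-y|^{n+1+s}} .
\]
If this integral is infinite, then~$\mathrm H_{s,E}(x)=-\infty$, and~$\mathrm H_{s,F_\rho}(x)\ge 0$ with~$\rho$ small would have to be reconciled with that; this is the crux. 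So the key step, and the main obstacle, is the following: use Definition~\ref{def:viscsol} with~$C^2$ sets~$F$ that, inside~$B_\rho(x)$, fill up~$E$ better than the ball does. I would take the largest~$C^2$ sets under~$E$, obtained by smoothing~$E$ from inside while keeping the interior ball touching at~$x$. Let~$G_k \uparrow$ be smooth sets with~$B \subset G_k \subset E$ near~$x$, with~$x \in \partial G_k$ and~$\partial G_k$ still~$C^2$ near~$x$, and with~$\chi_{G_k}\to\chi_E$ a.e. Then monotone convergence on the kernel away from~$x$, together with domination near~$x$ by the ball's integrable cancellation, gives~$\mathrm H_{s,G_k}(x)\to \mathrm H_{s,E}(x)$. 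This yields~$\mathrm H_{s,E}(x)\ge 0$. Constructing the~$G_k$ is done locally as unions~$B \cup (E_k \setminus B_{\rho_k}(x))$ with~$\rho_k \to 0$, where~$E_k$ is an inner regularisation of~$E$ away from~$x$. Since~$E$ is only measurable, I would use the set of density points and an inner approximation by finite unions of small closed balls, and then smooth the corners away from~$x$.

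The converse, Definition~\ref{def:viscsol2} implies Definition~\ref{def:viscsol}, is more direct. Let~$F \subset E$ with~$x\in\partial F$ and~$\partial F$ of class~$C^2$ near~$x$. Then~$F$ contains an interior ball~$B$ touching at~$x$, so~$B \subset E$, and Definition~\ref{def:viscsol2} gives~$\mathrm H_{s,E}(x)\ge 0$. Since~$F \subset E$, we have~$\chi_{F^c}-\chi_F \ge \chi_{E^c}-\chi_E$ pointwise. Both principal values exist: for~$F$ by~$C^2$ regularity, and for~$E$ by the interior ball. The monotonicity holds at the level of truncated integrals, so passing to the limit gives~$\mathrm H_{s,F}(x)\ge \mathrm H_{s,E}(x)\ge 0$. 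The nondegeneracy conditions~$|B_r(x)\setminus E|>0$ are common to both definitions.
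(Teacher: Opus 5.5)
Your proof that Definition~\ref{def:viscsol2} implies Definition~\ref{def:viscsol} is correct and is the same as the paper's. The converse is not established as written. You begin with exactly the paper's test sets, $F_\rho=(E\setminus B_\rho(x))\cup(B\cap B_\rho(x))$. You then abandon them because comparing $\mathrm H_{s,E}(x)$ with $\mathrm H_{s,F_\rho}(x)$ at a fixed $\rho$ gives the wrong inequality. The missing observation is that the truncation radius of the principal value should be tied to $\rho$. Since $F_\rho$ agrees with $B$ inside $B_\rho(x)$ and with $E$ outside it, Definition~\ref{def:viscsol} gives
\[
0\le \mathrm H_{s,F_\rho}(x)=\PV\int_{B_\rho(x)}\frac{\chi_{\R^{n+1}\setminus B}(y)-\chi_B(y)}{|x-y|^{n+1+s}}\,\dd y+\int_{\R^{n+1}\setminus B_\rho(x)}\frac{\chi_{\R^{n+1}\setminus E}(y)-\chi_E(y)}{|x-y|^{n+1+s}}\,\dd y .
\]
The first term is $O(\rho^{1-s})$. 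The second is finite for each $\rho>0$, and it is precisely the truncated integral whose limit defines $\mathrm H_{s,E}(x)$. It is therefore bounded below by $-C\rho^{1-s}$, and letting $\rho\to0^+$ gives $\mathrm H_{s,E}(x)\ge0$. The case $\mathrm H_{s,E}(x)=-\infty$, which you call the crux, is excluded automatically; no further idea is needed.

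The replacement route you propose contains a step that fails for merely measurable $E$. You ask for sets $G_k$ with smooth boundary (after ``smoothing the corners''), $G_k\subset E$, and $\chi_{G_k}\to\chi_E$ a.e., built from an inner approximation of $E$ by finite unions of balls. Any open set contained in $E$ up to a null set lies in $E_{\rm int}$. However, $|E\setminus E_{\rm int}|$ can be positive: take $E=B\cup K$ with $K$ a closed set of positive measure and empty interior placed far from $x$. Such an $E$ still satisfies all the hypotheses at $x$, and no such $G_k$ exist for it.

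The regularisation is also unnecessary, because Definition~\ref{def:viscsol} only asks for $\partial F$ to be $C^2$ near $x$. If you take $E_k=E$, your $G_k$ are exactly $F_{\rho_k}$, and your monotone idea then works once made precise. Replace your ``domination near $x$'' (no dominating function is available a priori) by the identity
\[
\mathrm H_{s,G}(x)=\mathrm H_{s,B}(x)-2\int_{G\setminus B}\frac{\dd y}{|x-y|^{n+1+s}}\in[-\infty,\infty), \qquad \text{valid for } B\subset G .
\]
By monotone convergence, $\mathrm H_{s,F_{\rho_k}}(x)$ decreases to $\mathrm H_{s,E}(x)$, and a decreasing sequence of non-negative numbers has a non-negative limit.
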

\begin{proof}
One implication is immediate. Indeed, suppose~$E$ to be a viscosity supersolution of~$\mathrm H_{s, E} = 0$ at~$x$ as per Definition~\ref{def:viscsol2} and let~$F \subset E$ be a set for which~$x \in \partial F$ and such that~$\partial F$ is of class~$C^2$ in a neighbourhood of~$x$. By this last regularity requirement, it is clear that~$F$ satisfies an interior ball condition at~$x$, i.e., that there exists a ball~$B \subset F$ with~$x \in \partial B$. As~$F$ is included in~$E$, we have that~$B \subset E$. Hence,~$\mathrm H_{s, E}(x) \ge 0$, according to Definition~\ref{def:viscsol2}. Using again that~$F \subset E$, this implies
\begin{align*}
0 & \le \mathrm H_{s, E}(x) = \PV \int_{\R^{n + 1}} \frac{\chi_{\R^{n + 1} \setminus E}(y) - \chi_E(y)}{|x - y|^{n + 1 + s}} \dd y \\
& \le \PV \int_{\R^{n + 1}} \frac{\chi_{\R^{n + 1} \setminus F}(y) - \chi_F(y)}{|x - y|^{n + 1 + s}} \dd y = \mathrm H_{s, F}(x),
\end{align*}
as required by Definition~\ref{def:viscsol}.

Conversely, suppose that~$\mathrm H_{s, E} \ge 0$ at~$x$ in the viscosity sense of Definition~\ref{def:viscsol} and that there exists a ball~$B_r(z) \subset E$ for some~$z \in \R^{n + 1}$ and~$r > 0$, with~$x \in \partial B_r(z)$. Up to a translation, rotation, and dilation, we may assume that~$x = 0$,~$z = e_1$, and~$r = 1$. Consider, for~$\varepsilon \in (0, 1)$, the set~$F_\varepsilon \coloneqq \big( {E \setminus B_\varepsilon} \big) \cup \big( {B_1(e_1) \cap B_{\varepsilon}} \big)$. Naturally,~$F_\varepsilon \subset E$,~$0 \in \partial F_\varepsilon$, and the boundary of~$F_\varepsilon$ is smooth in a neighbourhood of the origin. In view of Definition~\ref{def:viscsol}, we then have that
\begin{align*}
0 & \le \mathrm H_{s, F_\varepsilon}(0) = \PV \int_{\R^{n + 1}} \frac{\chi_{\R^{n + 1} \setminus F_\varepsilon}(y) - \chi_{F_\varepsilon}(y)}{|y|^{n + 1 + s}} \dd y \\
& = \PV \int_{B_\varepsilon} \frac{\chi_{\R^{n + 1} \setminus B_1(e_1)}(y) - \chi_{B_1(e_1)}(y)}{|y|^{n + 1 + s}} \dd y + \int_{\R^{n + 1} \setminus B_\varepsilon} \frac{\chi_{\R^{n + 1} \setminus E}(y) - \chi_{E}(y)}{|y|^{n + 1 + s}} \dd y,
\end{align*}
for every~$\varepsilon \in (0, 1)$. We now let~$\varepsilon \rightarrow 0^+$. It is easy to see that the first summand on the second line of the above inequality is of order~$\varepsilon^{1 - s}$ and thus goes to zero as~$\varepsilon \rightarrow 0^+$. Consequently,
$$
\mathrm H_{s, E}(0) = \lim_{\varepsilon \rightarrow 0^+} \int_{\R^{n + 1} \setminus B_\varepsilon} \frac{\chi_{\R^{n + 1} \setminus E}(y) - \chi_{E}(y)}{|y|^{n + 1 + s}} \dd y \ge 0,
$$
as desired.
\end{proof}

Notice that neither Definition~\ref{def:viscsol} nor Definition~\ref{def:viscsol2} \emph{see} sets of measure zero, meaning that if~$E$ is a (super-/sub-)solution of~$\mathrm H_{s, E}(x) = 0$ at some point~$x \in \partial E$ (in either of the senses prescribed by the two definitions), then so is any~$\widetilde{E} \subset \R^{n + 1}$ for which~$|{\widetilde{E} \Delta E}| = 0$. Hence, the notion of viscosity solution extends to the~$L^1_\loc(\R^{n + 1})$ class of a subset~$E$ of~$\R^{n + 1}$---or, more appropriately, of its characteristic function~$\chi_E$.

Moving forward, we shall always make a specific choice within the~$L^1_\loc(\R^{n + 1})$ class of a given set~$E \subset \R^{n + 1}$ and suppose that~$E$ contains its measure theoretic interior, does not intersect its measure theoretic exterior, and that its boundary coincides with its measure theoretic one. More explicitly, defining
\begin{align*}
E_{\rm int} & \coloneqq \Big\{ {x \in \R^{n + 1} \text{ s.t. } \big| {E \cap B_r(x)} \big| = |B_r| \mbox{ for some } r > 0} \Big\}, \\
E_{\rm ext} & \coloneqq \Big\{ {x \in \R^{n + 1} \text{ s.t. } \big| {E \cap B_r(x)} \big| = 0 \mbox{ for some } r > 0} \Big\}, \\
\partial^- E & \coloneqq \Big\{ {x \in \R^{n + 1} \text{ s.t. } 0 < \big| {E \cap B_r(x)} \big| < |B_r| \mbox{ for all } r > 0} \Big\} = \R^{n + 1} \setminus \big( {E_{\rm int} \cup E_{\rm ext}} \big),
\end{align*}
we suppose that
$$
E_{\rm int} \subset E, \quad E_{\rm ext} \cap E = \varnothing, \quad \mbox{and} \quad \partial E = \partial^- E,
$$
where inclusions and identities are understood here in the usual set theoretic sense. Note that any measurable set~$E \subset \R^{n + 1}$ can be redefined over a set of measure zero in order to fulfil such requirements---see, e.g., step two of the proof of~\cite[Proposition~12.19]{M12} or~\cite[Remark~1.9 and Appendix~C]{L19}. We also point out that, as a consequence of this choice, the requirements made in Definitions~\ref{def:viscsol} and~\ref{def:viscsol2} on the non-degeneracy of the densities of~$E$ and of its complement near a point~$x \in \partial E$ are redundant---as~$\partial E = \partial^- E$.

\subsection{A strong comparison principle}
This very brief subsection contains a strong comparison principle between a subsolution and a supersolution of the fractional mean curvature equation. As is often the case for nonlocal operators, these surfaces are required to solve their respective inequality only at the touching point---around which we require one of them to be smooth. See instead~\cite{DSV25} for a comparison principle which holds under no regularity assumptions, albeit for \emph{minimising} nonlocal minimal surfaces.

\begin{prop} \label{prop:compprinc}
    Let~$s \in (0, 1)$,~$E \subset F \subset \R^{n + 1}$ be two measurable sets, and~$x_0 \in \partial E \cap \partial F$. Suppose that~$\mathrm H_{s, E} \le 0$ and~$\mathrm H_{s, F} \ge 0$ at~$x_0$ in the viscosity sense and that either~$\partial E$ or~$\partial F$ is of class~$C^2$ in a neighbourhood of~$x_0$. Then,~$E = F$.
\end{prop}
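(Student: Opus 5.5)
The idea is to use the smooth set as the test set in the viscosity definition for the other one. This turns the two one-sided inequalities into a single pointwise inequality between $\mathrm H_{s,E}(x_0)$ and $\mathrm H_{s,F}(x_0)$. That inequality then forces $|F\setminus E|=0$.

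We split into two symmetric cases.

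\emph{Case 1: $\partial F$ is of class $C^2$ near $x_0$.} Since $F\supset E$, $x_0\in\partial F$, and $\partial F$ is $C^2$ around $x_0$, the set $F$ is an admissible competitor in the second item of Definition~\ref{def:viscsol} for $E$. Hence $\mathrm H_{s,E}\le 0$ at $x_0$ gives $\mathrm H_{s,F}(x_0)\le 0$. On the other hand, $\partial F$ is $C^2$ near $x_0$, and the pointwise and viscosity notions agree there (the remark after Definition~\ref{def:viscsol}, or Proposition~\ref{prop:visc-equiv} with an interior ball). So the hypothesis $\mathrm H_{s,F}\ge0$ at $x_0$ yields $\mathrm H_{s,F}(x_0)\ge 0$, and therefore $\mathrm H_{s,F}(x_0)=0$.

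Obtaining $\mathrm H_{s,F}(x_0)=0$ alone does not use $E$ quantitatively, so we need a sharper competitor. For $\varepsilon>0$ small, consider $G_\varepsilon\coloneqq (E\setminus B_\varepsilon(x_0))\cup(F\cap B_\varepsilon(x_0))$. Then $G_\varepsilon\supset E$, $x_0\in\partial G_\varepsilon$, and $\partial G_\varepsilon$ coincides with $\partial F$ near $x_0$, hence is $C^2$ there. Definition~\ref{def:viscsol} applied to $E$ gives $\mathrm H_{s,G_\varepsilon}(x_0)\le 0$. Moreover, $G_\varepsilon=F$ inside $B_\varepsilon(x_0)$, while $F\setminus G_\varepsilon=(F\setminus E)\setminus B_\varepsilon(x_0)$. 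Subtracting,
\begin{align*}
0\ge \mathrm H_{s,G_\varepsilon}(x_0)=\mathrm H_{s,F}(x_0)+2\int_{(F\setminus E)\setminus B_\varepsilon(x_0)}\frac{\dd y}{|x_0-y|^{n+1+s}}\ge 2\int_{(F\setminus E)\setminus B_\varepsilon(x_0)}\frac{\dd y}{|x_0-y|^{n+1+s}} .
\end{align*}
Hence $|(F\setminus E)\setminus B_\varepsilon(x_0)|=0$ for every $\varepsilon>0$. Letting $\varepsilon\to0^+$ gives $|F\setminus E|=0$, that is, $E=F$ in the measure-theoretic sense adopted in the paper. In fact, the first step with $F$ itself was unnecessary: the computation with $G_\varepsilon$ already uses $\mathrm H_{s,F}(x_0)\ge0$ directly.

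\emph{Case 2: $\partial E$ is of class $C^2$ near $x_0$.} This is symmetric. Use $H_\varepsilon\coloneqq (F\setminus B_\varepsilon(x_0))\cup(E\cap B_\varepsilon(x_0))\subset F$ as a test set in the first item of Definition~\ref{def:viscsol} for $F$, which gives $\mathrm H_{s,H_\varepsilon}(x_0)\ge0$. Then
\[
\mathrm H_{s,H_\varepsilon}(x_0)=\mathrm H_{s,E}(x_0)-2\int_{(F\setminus E)\setminus B_\varepsilon(x_0)}|x_0-y|^{-n-1-s}\dd y,
\]
and $\mathrm H_{s,E}(x_0)\le0$ pointwise by the $C^2$ regularity of $\partial E$. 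The same conclusion follows.

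The only delicate point is justifying the splitting of the principal values. Near $x_0$ both sets coincide with the $C^2$ set, so the principal value of that set exists and is finite. Away from $B_\varepsilon(x_0)$ the integrands are absolutely integrable. The difference of the two principal values is therefore exactly the integral over $(F\setminus E)\setminus B_\varepsilon(x_0)$, which is finite. One should also check that $x_0\in\partial G_\varepsilon$ in the measure-theoretic sense; this holds because $G_\varepsilon$ agrees with $F$ near $x_0$.
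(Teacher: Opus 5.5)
Your proof is correct and is essentially the paper's argument. The paper removes from $F$ a positive-measure set $D\subset (F\setminus E)\setminus B_\delta(x_0)$ and compares $\mathrm H_{s,F\setminus D}(x_0)\le 0$ with $\mathrm H_{s,F}(x_0)\ge 0$, which is your $G_\varepsilon$ computation phrased by contradiction. The only other difference is that the paper handles the case of $C^2$ boundary for $E$ by passing to complements, instead of writing out your symmetric Case~2.
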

\begin{proof}
    First of all, we observe that, without loss of generality, we may assume that~$\partial F$ is of class~$C^2$ in a neighbourhood of~$x_0$. Indeed, the case of smooth~$\partial E$ would follow from this by simply swapping~$E$ for~$\R^n \setminus F$ and~$F$ for~$\R^n \setminus E$.

    Next, we argue by contradiction and suppose that~$E \subsetneq F$. Then, there exist~$\delta > 0$ and a set~$D \subset (F \setminus E) \setminus B_\delta(x_0)$ such that
    \begin{equation} \label{Dhaspositivemeasure}
        |D| > 0.
    \end{equation}
    Let then~$F_0 \coloneqq F \setminus D$. Clearly,~$F_0$ has~$C^2$-boundary touching~$E$ from the outside at~$x_0$. Hence, we may exploit the fact that~$\mathrm H_{s, E} \le 0$ at~$x_0$ in the viscosity sense and deduce that
    $$
        \mathrm H_{s, F_0}(x_0) \le 0.
    $$
    On the other hand, as~$F$ has boundary of class~$C^2$ near~$x_0$ and satisfies~$H_{s, F} \ge 0$ at~$x_0$ in the viscosity sense, we also have
    $$
        \mathrm H_{s, F}(x_0) \ge 0.
    $$
    The combination of these two inequalities yields that
    $$
    	0 \ge \mathrm H_{s, F_0}(x_0) - \mathrm H_{s, F}(x_0) = 2 \int_{D} \frac{\dd y}{|x_0 - y|^{n + 1 + s}},
    $$
    in contradiction with~\eqref{Dhaspositivemeasure}. We then conclude that~$E = F$ as claimed.
\end{proof}

\subsection{A useful barrier}

As already discussed in the introduction, to establish Theorem~\ref{thm:s-halfspace} we will follow the classical argument of~\cite{HM90}. To this aim, we need a suitable supersolution of the~$s$-mean curvature equation, replacing the (half-)catenoid used in~\cite{HM90}. The following result provides such a barrier when~$n = 1$ and an approximate one when~$n \ge 2$.

\begin{prop} \label{prop:barrier}
    Let~$n \in \N$,~\(s\in (0,1)\),~\(\alpha \in (0,1)\), and set
    \begin{equation} \label{Fdef}
        F_{n, \alpha} \coloneqq \Big\{ { (x', x_{n + 1}) \in \R^{n + 1} \text{ s.t. } x_{n + 1}< |x'|^\alpha} \Big\}.
    \end{equation}
	\begin{enumerate}[leftmargin=23pt,label=$(\roman*)$]
		\item \label{barriercasei} If~$n = 1$ and~$\alpha \in (0, s)$, then there exists a radius~$R \ge 1$, depending only on~$s$ and~$\alpha$, such that
		$$
			\mathrm H_{s, F_{1, \alpha}}(x) > 0 \quad \mbox{for all } x \in \partial F_{1, \alpha} \setminus B_R.
		$$
		\item \label{barriercaseii} If~$n \ge 2$, then there exists a constant~$C \ge 1$, depending only on~$n$,~$s$, and~$\alpha$, such that
		$$
			\left| \mathrm H_{s, F_{n, \alpha}}(x) \right| \le C |x|^{\alpha - 1 - s} \quad \mbox{for all } x \in \partial F_{n, \alpha} \setminus B_1.
		$$
	\end{enumerate}
\end{prop}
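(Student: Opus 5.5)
We compute $\mathrm H_{s,F_{n,\alpha}}$ at a boundary point $x=(x',|x'|^\alpha)$ with $|x'|=r$ large, writing $u(y')\coloneqq|y'|^\alpha$. For subgraphs one has the standard graph representation
$$
\mathrm H_{s,F}(x)=2\,\PV\!\int_{\R^n} G_s\!\Big(\frac{u(y')-u(x')}{|y'-x'|}\Big)\frac{\dd y'}{|y'-x'|^{n+s}},\qquad G_s(t)\coloneqq\int_0^t\frac{\dd\tau}{(1+\tau^2)^{\frac{n+1+s}{2}}}.
$$
It follows by integrating in the vertical variable, since for each $y'$ the set $\{y_{n+1}>u(y')\}$ counts with sign $+$ and its complement with sign $-$. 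The identity is exact for the $C^2$ subgraph away from $x'=0$, and at $x'=0$ only an integrable cusp occurs, which affects the far-field terms but not the principal value at $x$. The function $G_s$ is odd, bounded, and satisfies $G_s(t)=t+O(|t|^3)$.

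We split $\R^n$ into three regions: a near region $|y'-x'|<r/2$, a far region $|y'|>2r$, and the annular/origin region $\{|y'|\le 2r\}\setminus B_{r/2}(x')$.

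\textbf{Near region.} Here $u$ is smooth with $|\nabla u|\sim r^{\alpha-1}$ and $|D^2u|\sim r^{\alpha-2}$. We Taylor expand, $u(y')-u(x')=\nabla u(x')\cdot z+O(r^{\alpha-2}|z|^2)$ with $z=y'-x'$. The linear (odd) term cancels in the principal value because $G_s$ is odd. The remaining contribution is bounded by
$$
\int_{|z|<r/2} r^{\alpha-2}|z|^{1-n-s}\,\dd z\sim r^{\alpha-1-s},
$$
together with a cubic remainder that is even smaller.

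\textbf{Far and annular regions.} In the far region $|u(y')-u(x')|/|y'-x'|\lesssim |y'|^{\alpha-1}$, so the contribution is $\lesssim\int_{|y'|>2r}|y'|^{\alpha-1-n-s}\,\dd y'\sim r^{\alpha-1-s}$. In the annulus, including near the origin, the difference quotient is $\lesssim r^{\alpha-1}$ (and $|G_s|\le|t|$), while the kernel is $\sim r^{-n-s}$ on a region of volume $\sim r^n$. This again gives $O(r^{\alpha-1-s})$.

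This proves \ref{barriercaseii} for large $|x|$. For $x\in\partial F_{n,\alpha}\setminus B_1$ with $|x'|$ bounded, $\partial F$ is smooth near $x$ (since $|x'|$ is bounded below there), so $\mathrm H_{s,F}$ is continuous and bounded. Noting $|x|\sim|x'|$, we can enlarge $C$.

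\textbf{Case \ref{barriercasei}.} This requires the sign, so we must identify the leading term, and this is the main obstacle. In one dimension the kernel is $|z|^{-1-s}$, and the far/annular contributions are no longer lower order; in fact all regions contribute at the same order $r^{\alpha-1-s}$. To handle this, we rescale $y'=r\eta$ and use $G_s(t)=t+O(t^3)$ with $t=O(r^{\alpha-1})$. This gives
$$
\mathrm H_{s,F}(x)=2\,r^{\alpha-1-s}\,\PV\!\int_{\R}\frac{|\eta|^\alpha-1}{|\eta-1|^{1+s}}\,\frac{\dd\eta}{|\eta-1|}+O(r^{3(\alpha-1)-s}),
$$
which is $c\,r^{\alpha-1-s}(-\Delta)^{\frac{1+s}{2}}(|\cdot|^\alpha)(1)$ up to a sign convention: $\mathrm H$ is positive when the function lies "below" in the sense of a supersolution, that is, when $(-\Delta)^{\frac{1+s}{2}}u>0$. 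The error term is controlled uniformly using the bounds above, though the cubic remainder near $\eta=0$ needs a separate bounded check.

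It therefore suffices to show that $(-\Delta)^{\sigma/2}|\eta|^\alpha>0$ at $\eta=1$ (hence for all $\eta\ne0$ by homogeneity) when $\sigma=1+s>1=n$ and $0<\alpha<s=\sigma-1$. We use the classical formula $(-\Delta)^{\sigma/2}|\eta|^{\alpha}=c(\alpha,\sigma,n)|\eta|^{\alpha-\sigma}$, where
$$
c=-2^\sigma\frac{\Gamma(\frac{\alpha+n}{2})\Gamma(\frac{\sigma-\alpha}{2})}{\Gamma(\frac{n-\sigma+\alpha}{2})\Gamma(-\frac{\alpha}{2})}\cdot\frac{1}{\pi^{0}}\ \text{(up to positive factors)}.
$$
Since $\Gamma(-\alpha/2)<0$ and $\frac{1-\sigma+\alpha}{2}=\frac{\alpha-s}{2}\in(-\tfrac12,0)$, this last Gamma value is also negative. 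The product of the two negative Gamma values is positive, so the overall sign of $c$ must be tracked carefully. We expect positivity exactly when $\alpha<\sigma-n$, consistent with $|\eta|^{\sigma-n}$ being the fundamental solution (the borderline case, where $c=0$). As a fallback, if the constant proves awkward, we would prove positivity by an interpolation or monotonicity argument in $\alpha$: $c$ vanishes at $\alpha=0$ and at $\alpha=\sigma-n$, and its derivative at $\alpha=0$ is positive. Choosing $R$ large then makes the $O(r^{3\alpha-3-s})$ error negligible.
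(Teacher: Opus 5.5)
Your argument for (ii) is fine, and the three-region bounds are even uniform in $r>0$ by scaling. Part (i), however, is not proved: its whole content is the \emph{sign} of the leading coefficient, and you never determine it.

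First, your graph representation has the wrong sign. Integrating in the vertical variable gives $G_s\big(\frac{u(x')-u(y')}{|x'-y'|}\big)$, not $G_s\big(\frac{u(y')-u(x')}{|y'-x'|}\big)$. So your displayed leading term is $-2\beta_{1,s,\alpha}r^{\alpha-1-s}$, where $\beta_{1,s,\alpha}=\PV\int_\R\frac{1-|\eta|^\alpha}{|1-\eta|^{2+s}}\dd\eta$. The phrase ``up to a sign convention'' hides exactly the quantity you need to determine. Your later heuristic, that $\mathrm H_{s,F}$ has the sign of $(-\Delta)^{\frac{1+s}{2}}u$, is correct, but it contradicts your own display.

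Second, the Gamma formula you quote carries a spurious minus sign. With it, your observation that both denominator Gammas are negative gives $c<0$, and you stop at ``we expect positivity''. The fallback does not close the gap: $c(0)=c(s)=0$ and $c'(0)>0$ do not exclude sign changes inside $(0,s)$, and $c'(0)>0$ is asserted, not checked.

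The route is salvageable. The correct formula is
\[
(-\Delta)^{\frac{\sigma}{2}}|\eta|^\alpha = 2^\sigma\,\frac{\Gamma\big(\frac{n+\alpha}{2}\big)\Gamma\big(\frac{\sigma-\alpha}{2}\big)}{\Gamma\big(\frac{n+\alpha-\sigma}{2}\big)\Gamma\big(-\frac{\alpha}{2}\big)}\,|\eta|^{\alpha-\sigma}.
\]
For $n=1$, $\sigma=2$ it gives $\alpha(1-\alpha)|\eta|^{\alpha-2}=-(|\eta|^\alpha)''$, as it should. For $n=1$, $\sigma=1+s$, $\alpha\in(0,s)$ your own sign count then shows it is positive. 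The paper avoids special functions altogether. In Lemma~\ref{GO1TWVl8} it folds $(1,+\infty)$ onto $(0,1)$ via $t\mapsto 1/t$ and obtains
\[
\beta_{1,s,\alpha}=\lim_{\varepsilon\to0^+}\int_0^{1-\varepsilon}(1-t^\alpha)(1-t^{s-\alpha})\big[(1-t)^{-2-s}+(1+t)^{-2-s}\big]\dd t,
\]
whose integrand is visibly positive exactly when $\alpha<s$.

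Your error analysis also puts the difficulty in the wrong place. Near $\eta=0$ nothing happens. Near $\eta=1$, the argument $t$ of $G_s$ tends to $\pm\alpha r^{\alpha-1}\neq 0$, so the bound $|G_s(t)-t|\lesssim|t|^3$ is not integrable against $|\eta-1|^{-1-s}$. The fix is routine and uses the same cancellation as your near-region step:
\begin{itemize}
\item subtract the odd term $\widetilde G_s\big(\alpha r^{\alpha-1}\frac{1-\eta}{|1-\eta|}\big)$, which integrates to zero;
\item bound the difference with $|\widetilde G_s(a)-\widetilde G_s(b)|\lesssim\max\{a^2,b^2\}|a-b|$ (Lemma~\ref{Gslem}).
\end{itemize}
This yields the $O(r^{3(\alpha-1)-s})$ remainder needed to choose $R$.
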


To prove Proposition~\ref{prop:barrier}, we will use the representation of the~$s$-mean curvature of a subgraph in terms of an integral operator acting on its defining function. Indeed, if
$$
E = \Big\{ {(x', x_{n + 1}) \in \R^{n + 1} \text{ s.t. } x_{n + 1} < u(x')} \Big\},
$$
for some measurable function~$u: \R^n \to \R$, then
\begin{equation} \label{s-curvofgraphs}
\mathrm H_{s, E}(x) = 2 \, \PV \int_{\R^n} G_s \bigg( {\frac{u(x') - u(y')}{|x' - y'|}} \bigg) \, \frac{\dd y'}{|x' - y'|^{n + s}},
\end{equation}
with~$G_s: \R \to \R$ defined by
\begin{equation} \label{Gdef}
G_s(t) \coloneqq \int_0^t \frac{\dd \tau}{\big( {1 + \tau^2} \big)^{\frac{n + 1 + s}{2}}} \quad \mbox{for } t \in \R,
\end{equation}
at every point~$x = (x', u(x'))$ around which~$\partial E$ is of class~$C^2$---see, e.g.,~\cite{AV14,BFV14,BLV19}. Through this representation, it is apparent that the~$s$-mean curvature of the subgraph~$E$ is well-approximated by the~$\frac{1 + s}{2}$-Laplacian of~$u$ near points with small gradients. To make this fact rigorous, we will take advantage of the following simple estimate.

\begin{lem} \label{Gslem}
    Let~$s \in (0, 1)$ and~$\widetilde{G}_s: \R \to \R$ be the odd function defined by
    $$
    \widetilde{G}_s(t) \coloneqq G_s(t) - t \quad \mbox{for } t \in \R,
    $$
	with~$G_s$ given by~\eqref{Gdef}. Then,
    \begin{equation} \label{Gsest}
    \big| {\widetilde{G}_s(a) - \widetilde{G}_s(b)} \big| \le \frac{n + 2}{2} \, \max \big\{ {a^2, b^2} \big\} |a - b| \quad \mbox{for every } a, b \in \R.     
    \end{equation}
\end{lem}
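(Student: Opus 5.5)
The plan is to write the difference as an integral of the derivative and to bound that derivative pointwise. Since $\widetilde{G}_s$ is smooth with
$$
\widetilde{G}_s'(\tau) = \big( {1 + \tau^2} \big)^{-\frac{n + 1 + s}{2}} - 1,
$$
the fundamental theorem of calculus gives
$$
\widetilde{G}_s(a) - \widetilde{G}_s(b) = \int_b^a \widetilde{G}_s'(\tau) \dd \tau .
$$
Hence it suffices to show that
$$
\big| {\widetilde{G}_s'(\tau)} \big| \le \frac{n + 2}{2} \, \tau^2 \quad \mbox{for every } \tau \in \R .
$$
Once this holds, every $\tau$ between $b$ and $a$ satisfies $\tau^2 \le \max\{a^2, b^2\}$. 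Integrating over an interval of length $|a - b|$ then yields~\eqref{Gsest}.

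For the pointwise bound, set $p \coloneqq \frac{n + 1 + s}{2} > 0$. Note that $0 \le 1 - (1 + \tau^2)^{-p} \le 1$. I split into two cases, with the key inequality being Bernoulli's.

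\emph{Case $p \tau^2 \le 1$.} Bernoulli's inequality $(1 + \tau^2)^{p} \ge 1 + p\tau^2$ requires $p \ge 1$. If instead $p < 1$, use the concavity bound $\log(1 + \tau^2) \ge \tau^2/(1 + \tau^2)$, or simply the mean value theorem applied to $x \mapsto (1 + x)^{-p}$ on $[0, \tau^2]$, whose derivative is bounded in absolute value by $p$. Either route gives
$$
1 - (1 + \tau^2)^{-p} \le p \, \tau^2 .
$$
In fact, the mean value theorem gives this for all $\tau$ at once, so the case split is unnecessary. The resulting constant is $p = \frac{n + 1 + s}{2} < \frac{n + 2}{2}$, because $s < 1$.

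Combining, $|\widetilde{G}_s'(\tau)| \le \frac{n + 2}{2}\tau^2$, and the integration step concludes the proof. There is no real obstacle. The only care needed is in the case $b > a$ and for $a, b$ of opposite signs, but these are handled uniformly by writing
$$
\bigg| {\int_b^a \widetilde{G}_s'(\tau) \dd \tau} \bigg| \le |a - b| \sup_{\tau \in [\min\{a,b\}, \max\{a,b\}]} \big| {\widetilde{G}_s'(\tau)} \big| .
$$
The oddness of $\widetilde{G}_s$ plays no role in this argument.
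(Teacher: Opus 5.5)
Your argument is correct and is essentially the paper's proof. Both integrate $\widetilde{G}_s'(\tau) = (1+\tau^2)^{-\frac{n+1+s}{2}} - 1$ from $b$ to $a$ and bound it in absolute value by $\frac{n+1+s}{2}\,\tau^2 \le \frac{n+2}{2}\max\{a^2,b^2\}$. The paper gets this bound by writing $\widetilde{G}_s'(\tau) = -(n+1+s)\int_0^{|\tau|}\sigma(1+\sigma^2)^{-\frac{n+3+s}{2}}\,\mathrm{d}\sigma$ and dropping the denominator; you get it from the mean value theorem for $x\mapsto(1+x)^{-p}$ on $[0,\tau^2]$.

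Delete the Bernoulli and logarithm remarks. As written, $(1+\tau^2)^p\ge 1+p\tau^2$ and $\log(1+\tau^2)\ge \tau^2/(1+\tau^2)$ give lower bounds on $1-(1+\tau^2)^{-p}$, not upper bounds. What you need is $(1+\tau^2)^{-p}\ge 1-p\tau^2$, which is Bernoulli's inequality with the negative exponent $-p$. Your mean value theorem step already gives this on its own.
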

\begin{proof}
    We compute
    \begin{align*}
        \big| {\widetilde{G}_s(a) - \widetilde{G}_s(b)} \big| & = \big| {G_s(a) - G_s(b) - a + b} \big| = \bigg| {\int_b^a \bigg( {\frac{1}{\big({1 + \tau^2}\big)^{\frac{n + 1 + s}{2}}} - 1} \bigg) \dd \tau} \bigg| \\
        & = (n + 1 + s) \, \bigg| {\int_b^a \bigg( {\int_0^\tau \frac{\sigma}{\big({1 + \sigma^2}\big)^{\frac{n + 3 + s}{2}}} \dd \sigma} \bigg) \dd \tau} \bigg|.
    \end{align*}
    Notice that the map
    $$
    \R \ni \tau \longmapsto \int_0^\tau \frac{\sigma}{\big({1 + \sigma^2}\big)^{\frac{n + 3 + s}{2}}} \dd \sigma
    $$
    is even and non-negative. Assuming, without loss of generality, that~$b < a$, we then obtain
    \begin{align*}
       \big| {\widetilde{G}_s(a) - \widetilde{G}_s(b)} \big| & = (n + 1 + s) \int_b^a \bigg( {\int_0^{|\tau|} \frac{\sigma}{\big({1 + \sigma^2}\big)^{\frac{n + 3 + s}{2}}} \dd \sigma} \bigg) \dd \tau \\
       & \le (n + 1 + s) \int_b^a \bigg( {\int_0^{|\tau|} \sigma \dd \sigma} \bigg) \dd \tau \le \frac{n + 2}{2} \max \big\{ {a^2, b^2} \big\} (a - b),
    \end{align*}
    as desired.
\end{proof}

To address the case~$n = 1$ in Proposition~\ref{prop:barrier} the following result will be of key importance. Even though it is already known in the literature---it can be deduced, for instance, from the more general~\cite[Proposition~2.10]{ADV25}---, we provide a proof in full details for the reader's convenience.

\begin{lem} \label{GO1TWVl8}
    Let \(s\in (0,1)\) and \(\alpha \in (0, 1+s)\). Then
	$$
        \PV \int_{\R}\frac{1- \vert t\vert^\alpha}{\vert 1-t\vert^{2+s} } \dd t = \lim_{\varepsilon \to 0^+} \int_0^{1-\varepsilon}  \big ( 1- t^\alpha\big )\big (1 -t^{s-\alpha} \big )\bigg [ \frac1{(1-t )^{2+s} } + \frac1{( t+1 )^{2+s} } \bigg ] \dd t .
    $$
	In particular,
	$$
        \PV \int_{\R}\frac{1- \vert t\vert^\alpha}{\vert 1-t\vert^{2+s} } \dd t \, \begin{cases}
              \, >0 & \quad \text{if } \alpha \in (0, s), \\
              \, =0 & \quad \text{if } \alpha=s, \\
              \, <0 & \quad \text{if } \alpha \in (s, 1 + s).
        \end{cases}
    $$
\end{lem}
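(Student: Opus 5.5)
The plan is to split the real line at the points $0$, $\pm1$ and fold everything onto $(0,1)$ using the substitutions $t\mapsto -t$ and $t\mapsto 1/t$. The principal value is taken around $t=1$ only; the point $t=0$ is integrable because $\alpha>0$, and $\infty$ is integrable because $\alpha<1+s$.

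\textbf{Step 1: fold the negative half-line.} For $t<0$, write $t=-\tau$. This turns $\int_{-\infty}^0$ into $\int_0^\infty \frac{1-\tau^\alpha}{(1+\tau)^{2+s}}\dd\tau$. Together with $\int_0^\infty$ over $|1-t|>\varepsilon$, we get
$$\int_{(0,\infty)\setminus(1-\varepsilon,1+\varepsilon)}(1-t^\alpha)\,K(t)\dd t + \int_{1-\varepsilon}^{1+\varepsilon}\frac{1-t^\alpha}{(1+t)^{2+s}}\dd t,$$
where $K(t)=(1-t)^{-2-s}+(1+t)^{-2-s}$, with the absolute value understood in the first term. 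The last integral tends to $0$.

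\textbf{Step 2: fold $(1,\infty)$ onto $(0,1)$.} On $(1+\varepsilon,\infty)$, substitute $t=1/\tau$, so that $\dd t=\tau^{-2}\dd\tau$. Then $|1-t|=(1-\tau)/\tau$ and $1+t=(1+\tau)/\tau$, hence $K(1/\tau)\,\tau^{-2}=\tau^{s}K(\tau)$, and $1-\tau^{-\alpha}=-\tau^{-\alpha}(1-\tau^\alpha)$. The image of this piece is therefore
$$-\int_0^{1/(1+\varepsilon)}(1-\tau^\alpha)\,\tau^{s-\alpha}K(\tau)\dd\tau.$$
Adding this to the $(0,1-\varepsilon)$ piece gives the integrand $(1-t^\alpha)(1-t^{s-\alpha})K(t)$ on $(0,1-\varepsilon)$.

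\textbf{Step 3 (the main obstacle): the mismatched window.} The endpoints do not match: $1/(1+\varepsilon)=1-\varepsilon+\varepsilon^2+O(\varepsilon^3)$, which is not $1-\varepsilon$. The leftover term is
$$\int_{1-\varepsilon}^{1/(1+\varepsilon)}(1-t^\alpha)\,t^{s-\alpha}K(t)\dd t.$$
Its interval has length $O(\varepsilon^2)$, and on it we have $1-t^\alpha=O(\varepsilon)$ and $K=O(\varepsilon^{-2-s})$. So the term is $O(\varepsilon^{1-s})\to0$.

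Since the folded integrand near $t=1$ behaves like $(1-t)^2(1-t)^{-2-s}=(1-t)^{-s}$, which is integrable, the limit exists and equals an absolutely convergent integral. This is what I expect to be the only delicate point: one must check that the folded integrand is indeed $O((1-t)^{-s})$, so that the PV is legitimate and the small window is negligible.

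\textbf{Step 4: the sign.} On $(0,1)$ we always have $1-t^\alpha>0$ and $K>0$. The factor $1-t^{s-\alpha}$ is positive when $s-\alpha>0$, identically zero when $\alpha=s$, and negative when $s-\alpha<0$. This gives the three cases, with strict inequalities because the integrand has a constant sign and is nonzero almost everywhere.
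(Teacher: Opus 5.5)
Your proposal is correct and follows the paper's proof essentially step by step. It uses the same reflection $t\mapsto -t$ on the negative half-line and the same inversion $t\mapsto 1/t$ on $(1+\varepsilon,+\infty)$ producing the factor $t^{s-\alpha}$. It also has the same negligible contribution from the window $(1-\varepsilon,1+\varepsilon)$ and the same $O(\varepsilon^{1-s})$ bound for the mismatched window $[1-\varepsilon,\frac{1}{1+\varepsilon}]$. Your added remark that the folded integrand is $O((1-t)^{-s})$ near $t=1$, so the limit is an absolutely convergent integral and the strict sign conclusions follow, makes explicit a point the paper leaves implicit. The only cosmetic slip is that $\dd t=-\tau^{-2}\dd\tau$, with the sign absorbed by reversing the limits, which your final formula already accounts for.
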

\begin{proof}
    In this proof,~$C$ denotes a positive constant depending only on~$s$ and~$\alpha$, whose value may change from line to line.

	Fix~\(\varepsilon \in (0, 1)\) small. Then,
	\begin{align*}
        \int_{\R\setminus (1-\varepsilon,1+\varepsilon)}\frac{1- \vert t\vert^\alpha}{\vert 1-t\vert^{2+s} } \dd t &=  \int_{(0,+\infty)\setminus (1-\varepsilon,1+\varepsilon)}\frac{1-  t^\alpha}{\vert 1-t\vert^{2+s} } \dd t+\int_{-\infty}^0\frac{1- (-t)^\alpha}{( 1-t)^{2+s} } \dd t \\
        &= \int_{(0,+\infty)\setminus (1-\varepsilon,1+\varepsilon)}\frac{1-  t^\alpha}{\vert 1-t\vert^{2+s} } \dd t+\int_0^{+\infty}\frac{1- t^\alpha}{( 1+t)^{2+s} } \dd t \\
        &= I_1^{(\varepsilon)}+I_2^{(\varepsilon)}+I_3^{(\varepsilon)},
    \end{align*} where \begin{align*}
         I_1^{(\varepsilon)}& \coloneqq \int_0^{1-\varepsilon}  \big ( 1- t^\alpha \big ) \bigg [ \frac1{(1-t)^{2+s} } + \frac1{( 1+t)^{2+s} } \bigg] \dd t, \\
        I_2^{(\varepsilon)} & \coloneqq \int_{1+\varepsilon}^{+\infty} \big ( 1- t^\alpha \big ) \bigg [ \frac1{(t-1)^{2+s} } + \frac1{( 1+t)^{2+s} } \bigg] \dd t, \\
        I_3^{(\varepsilon)} & \coloneqq \int_{1-\varepsilon}^{1+\varepsilon}\frac{1- t^\alpha}{( 1+t)^{2+s} } \dd t.
    \end{align*}

    For~\(I_3^{(\varepsilon)}\), we have that \begin{align*}
        \big \vert I_3^{(\varepsilon)} \big \vert \leq \int_{1-\varepsilon}^{1+\varepsilon} \big \vert {1- t^\alpha} \big \vert  \dd t \leq C \varepsilon^2, 
    \end{align*}
	so~$\lim_{\varepsilon\to0^+} I_3^{(\varepsilon)} =0$.

     For~\(I_2^{(\varepsilon)}\), we make the change of variable~\(t\to t^{-1}\) to obtain \begin{align*}
        I_2^{(\varepsilon)} &= \int_0^{\frac 1 {1+\varepsilon}} \big ( 1- t^{-\alpha} \big ) \bigg [ \frac1{(t^{-1}-1 )^{2+s} } + \frac1{( 1+t^{-1} )^{2+s} } \bigg] \frac{\dd t}{t^2} \\
        &= -\int_0^{\frac 1 {1+\varepsilon}} t^{s-\alpha}\big ( 1-t^\alpha \big ) \bigg [ \frac1{(1-t )^{2+s} } + \frac1{( t+1 )^{2+s} } \bigg] \dd t .
    \end{align*} Hence, if \begin{align*}
        I_4^{(\varepsilon)} \coloneqq \int_{1-\varepsilon}^{\frac 1 {1+\varepsilon}} t^{s-\alpha}\big ( 1-t^\alpha \big ) \bigg [  \frac1{(1-t )^{2+s} } + \frac1{( t+1 )^{2+s} }\bigg ] \dd t,
    \end{align*} it follows that \begin{align*}
        I_1^{(\varepsilon)}+I_2^{(\varepsilon)} &=\int_0^{1-\varepsilon}  \big ( 1- t^\alpha\big )\big (1 -t^{s-\alpha} \big ) \bigg [  \frac1{(1-t )^{2+s} } + \frac1{( t+1 )^{2+s} }\bigg ]\dd t  -I_{4}^{(\varepsilon)}.
    \end{align*}
	Note that
	$$
        0\leq I_{4}^{(\varepsilon)}
		\le C \int_{1-\varepsilon}^{\frac 1 {1+\varepsilon}}  \frac{ 1-t^\alpha}{(1-t )^{2+s} } \dd t \le C \int_{1-\varepsilon}^{\frac 1 {1+\varepsilon}}  \frac{ \dd t}{(1-t )^{1+s} } \dd t \le C \varepsilon^{1 - s}.
	$$

    Collecting all the estimates above, we have that \begin{align*}
         \PV \int_{\R}\frac{1- \vert t\vert^\alpha}{\vert 1-t\vert^{2+s} } \dd t &= \lim_{\varepsilon\to 0^+} \Big ({I_1^{(\varepsilon)}+I_2^{(\varepsilon)}+I_3^{(\varepsilon)}} \Big )\\
         &= \lim_{\varepsilon \to 0^+} \int_0^{1-\varepsilon}  \big ( 1- t^\alpha\big )\big (1 -t^{s-\alpha} \big )\bigg [ \frac1{(1-t )^{2+s} } + \frac1{( t+1 )^{2+s} } \bigg ] \dd t,
    \end{align*}as required. 
\end{proof}

Thanks to the previous two lemmas, we can now address the proof of Proposition~\ref{prop:barrier}.

\begin{proof}[Proof of Proposition~\ref{prop:barrier}]
    Let~$x \in \partial F_{n, \alpha} \setminus B_1$. By symmetry, we may assume without loss of generality that~$x' = r e_1$ with~$r > 0$. Notice that~$x \notin B_1$ actually implies that~$r \ge 2^{- \frac{1}{2 \alpha}}$. Recalling definition~\eqref{Fdef} and representation~\eqref{s-curvofgraphs}, we write
    $$
       \mathrm H_{s,F_{n, \alpha}}(x) = 2 \, \PV \int_{\R^n} G_s \bigg (\frac{ r^\alpha-\vert y'\vert^\alpha}{\vert r e_1 - y' \vert } \bigg ) \frac{\dd y' }{\vert r e_1 - y'\vert^{n+s}}.
    $$
	Making the change of variable~\( y' \to r y' \) gives
    $$
        \mathrm H_{s,F_{n, \alpha}}(x)=2 r^{-s} \, \PV \int_{\R^n} G_s \bigg (r^{\alpha-1}\frac{1- \vert y'\vert^\alpha}{\vert e_1 - y' \vert } \bigg ) \frac{\dd y'}{\vert e_1 - y'\vert^{n+s}},
    $$
	which, recalling the definition of~$\widetilde{G}_s$ given in Lemma~\ref{Gslem}, we rewrite as
    \begin{equation} \label{HsFasGtilde}
        \mathrm H_{s,F_{n, \alpha}}(x) = 2 \beta_{n, s, \alpha} \, r^{\alpha - 1 - s} + 2 r^{-s} \, \PV \int_{\R^n} \widetilde{G}_s \bigg (r^{\alpha-1}\frac{1- \vert y' \vert^\alpha}{\vert e_1 - y' \vert } \bigg ) \frac{\dd y'}{\vert e_1 - y' \vert^{n+s}},
    \end{equation}
    with
    $$
        \beta_{n, s, \alpha} \coloneqq \PV \int_{\R^n}\frac{1- \vert y'\vert^\alpha}{\vert e_1 - y'\vert^{n+1+s} } \dd y'.
    $$
    
    We now proceed to estimate the second summand on the right-hand side of~\eqref{HsFasGtilde}. First of all, a simple computation yields that
    $$
        -\alpha \vert e_1 - y' \vert  \leq 1-\vert y' \vert^\alpha \leq \vert e_1 - y' \vert \quad \text{for all } y' \in \R^n,
    $$
    and thus
    $$
        \left| \frac{1 - \vert y'\vert^\alpha}{\vert e_1 - y'\vert } \right| \le 1 \quad \mbox{for all } y' \in \R^n.
    $$
    By applying estimate~\eqref{Gsest} of Lemma~\ref{Gslem} with~$a = r^{\alpha-1}\frac{1- \vert y'\vert^\alpha}{\vert e_1 - y'\vert }$ and~$b = 0$, we see that
    \begin{equation} \label{outerest}
    \begin{aligned}
        & \Bigg| {\int_{\R^n \setminus B_{\frac{1}{2}}'(e_1)} \widetilde{G}_s \bigg (r^{\alpha-1}\frac{1- \vert y'\vert^\alpha}{\vert e_1 - y' \vert } \bigg ) \frac{\dd y'}{\vert e_1 - y'\vert^{n+s}}} \Bigg|\\
		& \hspace{70pt} \le \int_{\R^n \setminus B_{\frac{1}{2}}'(e_1)} \left| \widetilde{G}_s \bigg (r^{\alpha-1}\frac{1- \vert y'\vert^\alpha}{\vert e_1 - y' \vert } \bigg ) \right| \frac{\dd y'}{\vert e_1 - y' \vert^{n+s}} \\
        & \hspace{70pt} \le \frac{n + 2}{2} \, r^{3(\alpha-1)} \int_{\R^n \setminus B_{\frac{1}{2}}'} \frac{\dd z'}{\vert z' \vert^{n + s} } \le \frac{(n + 2) \mathcal{H}^{n - 1} \big( {\partial B_1'} \big)}{s} \, r^{3(\alpha-1)},
    \end{aligned}
    \end{equation}
	where, here and in the remainder of the paper,~$B_r'(x_0')$ indicates the open~$n$-dimensional ball of radius~$r > 0$ centred at a point~$x_0' \in \R^n$, while~$B_r' \coloneqq B_r'(0)$. The bound for the integral over~$B_{\frac{1}{2}}'(e_1)$ is slightly more involved. We begin by observing that, by symmetry,
    $$
        \PV \int_{B_{\frac{1}{2}}'(e_1)} \widetilde{G}_s \bigg (\alpha r^{\alpha-1}\frac{1 - y_1}{\vert e_1 - y'\vert } \bigg ) \frac{\dd y'}{\vert e_1 - y' \vert^{n+s}} = 0.
    $$
    Hence, by subtracting this term we may write
    \begin{align*}
        & \PV \int_{B_{\frac{1}{2}}'(e_1)} \widetilde{G}_s \bigg (r^{\alpha-1}\frac{1- \vert y'\vert^\alpha}{\vert e_1 - y' \vert } \bigg ) \frac{\dd y'}{\vert e_1 - y'\vert^{n+s}} \\
        & \hspace{50pt} = \int_{B_{\frac{1}{2}}'(e_1)} \left\{ \widetilde{G}_s \bigg (r^{\alpha-1}\frac{1- \vert y'\vert^\alpha}{\vert e_1 - y' \vert } \bigg ) - \widetilde{G}_s \bigg (\alpha r^{\alpha-1}\frac{1 - y_1}{\vert e_1 - y'\vert } \bigg ) \right\} \frac{\dd y'}{\vert e_1 - y' \vert^{n+s}},
    \end{align*}
    from which, using~\eqref{Gsest} with~$a = r^{\alpha-1}\frac{1- \vert y'\vert^\alpha}{\vert e_1 - y' \vert }$ and~$b = \alpha r^{\alpha-1}\frac{1 - y_1}{\vert e_1 - y'\vert }$, it follows that
    \begin{align*}
		& \Bigg| {\PV \int_{B_{\frac{1}{2}}'(e_1)} \widetilde{G}_s \bigg (r^{\alpha-1}\frac{1- \vert y'\vert^\alpha}{\vert e_1 - y' \vert } \bigg ) \frac{\dd y'}{\vert e_1 - y'\vert^{n+s}}} \Bigg| \\
		& \hspace{90pt} \le \frac{n + 2}{2} \, r^{3(\alpha - 1)} \int_{B_{\frac{1}{2}}'(e_1)} \frac{\big| {1- \vert y'\vert^\alpha - \alpha (1 - y_1)} \big|}{\vert e_1-y'\vert^{n + 1 +s}} \dd y'.
    \end{align*}
    Since
	$$
        \big| {1- \vert y'\vert^\alpha - \alpha (1 - y_1)} \big| \le C_n |y' - e_1|^2 \quad \mbox{for all } y' \in B_{\frac{1}{2}}(e_1),
    $$
    for some dimensional constant~$C_n > 0$, we infer that
    \begin{align*}
		\Bigg| {\PV \int_{B_{\frac{1}{2}}'(e_1)} \widetilde{G}_s \bigg (r^{\alpha-1}\frac{1- \vert y'\vert^\alpha}{\vert e_1 - y' \vert } \bigg ) \frac{\dd y'}{\vert e_1 - y'\vert^{n+s}}} \Bigg| & \le C_n r^{3(\alpha - 1)} \int_{B_{\frac{1}{2}}'(e_1)} \frac{\dd y'}{\vert e_1-y'\vert^{n - 1 +s}} \\
		& \le \frac{C_n}{1 - s} \, r^{3(\alpha - 1)},
	\end{align*}
    for some possibly larger~$C_n$. By combining this estimate with~\eqref{outerest}, identity~\eqref{HsFasGtilde} yields
    \begin{equation} \label{barrierfinalineq}
    \left| \mathrm H_{s,F_{n, \alpha}}(x) - 2 \beta_{n, s, \alpha} \, r^{\alpha - 1 - s} \right| \le \frac{C_n}{s(1 - s)} \, r^{3(\alpha - 1) - s} \quad \mbox{for every } r \ge 2^{- \frac{1}{2 \alpha}}.
    \end{equation}

	Notice that from this it immediately follows
	$$
		\left| \mathrm H_{s,F_{n, \alpha}}(x) \right| \le C \, |x'|^{\alpha - 1 - s} \quad \mbox{for every } x \in \partial F_{n, \alpha} \setminus B_1,
	$$
	for some constant~$C > 0$, depending only on~$n$,~$s$, and~$\alpha$, which readily gives claim~\ref{barriercaseii}. When~$n = 1$ and~$\alpha \in (0, s)$, Lemma~\ref{GO1TWVl8} ensures that~$\beta_{1, s, \alpha} > 0$. Therefore, inequality~\eqref{barrierfinalineq} yields
    $$
        \mathrm H_{s,F_{1, \alpha}}(x) \ge \left( 2 \beta_{1, s, \alpha} - \frac{C_1}{s(1 - s)} \, \frac{1}{r^{2(1 - \alpha)}} \right) r^{\alpha - 1 - s} \ge \beta_{1, s, \alpha} \, r^{\alpha - 1 - s},
    $$
    provided~$r$ is large enough, in dependence of~$s$ and~$\alpha$. Hence, claim~\ref{barriercasei} is also established.
\end{proof}

\subsection{Density estimates} \label{subsec:density}
We include here a couple of ``thick'' density estimates that we will use in the proof of Theorem~\ref{thm:s-halfspace} presented in Section~\ref{sec:ndproof}. Note that these estimates are well-known for minimisers of the fractional perimeter---see~\cite[Theorem~4.1]{CRS10}. Their novelty thus resides in the fact that they hold for more general, not necessarily minimising, nonlocal minimal surfaces.

Our first result is for sets that are stable for the~$s$-perimeter. Recall the following definition of stability from~\cite{CSV19,CCS20}.

\begin{defn} \label{def:stability}
A set~$E \subset \R^{n + 1}$ is said to be~\emph{stable} for the~$s$-perimeter in a bounded open set~$\Omega \subset \R^{n + 1}$ if~$\Per_s(E; \Omega) < +\infty$ and, given any vector field~$X = X(x, t) \in C^\infty \big( \R^{n + 1} \times (-1, 1); \R^{n + 1} \big)$ with~$\supp(X) \subset \Omega \times (- 1, 1)$, we have that
$$
\liminf_{t \rightarrow 0} \frac{1}{t^2} \Big( {\Per_s \! \big( {\Phi_X^t(E) \cap E; \Omega} \big) -  \Per_s(E; \Omega)} \Big) \ge 0
$$
and
$$
\liminf_{t \rightarrow 0} \frac{1}{t^2} \Big( {\Per_s \! \big( {\Phi_X^t(E) \cup E; \Omega} \big) -  \Per_s(E; \Omega)} \Big) \ge 0,
$$
where~$\Phi_X^t$ is the integral flow of~$X$ at time~$t$. When no reference to a container~$\Omega$ is made, we simply mean that~$E$ is stable in every bounded open set of~$\R^{n + 1}$.
\end{defn}

Note that, given a measurable~$F \subset \R^{n + 1}$ and an open set~$\Omega \subset \R^{n + 1}$, we indicate with~$\Per_s(F; \Omega)$ the fractional~$s$-perimeter of~$F$ inside~$\Omega$, defined by
$$
\Per_s(F; \Omega) \coloneqq \frac{1}{2} \iint_{Q(\Omega)} \frac{|\chi_F(x) - \chi_F(y)|}{|x - y|^{n + 1 + s}} \dd x \dd y,
$$
with
$$
Q(\Omega) \coloneqq \big( {\Omega \times \Omega} \big) \cup \big( {\Omega \times (\R^{n + 1} \setminus \Omega)} \big) \cup \big( {(\R^{n + 1} \setminus \Omega) \times \Omega} \big).
$$

We point out that Definition~\ref{def:stability} is not the only notion of~$s$-stability considered in the literature---see~\cite[Definition~2.2]{CCS20} for a weaker and perhaps more natural definition, as well as~\cite[Section~2]{CCS20} and~\cite[Section~1.2]{C25} for discussions on their relationship. Here, we consider stable sets in the sense of Definition~\ref{def:stability} since we need them to satisfy the perimeter estimates established in~\cite{CSV19}. These, alongside the fractional perimeter bounds of~\cite{T26}, are the main tools needed to prove density estimates for~$s$-stable sets, as stated in the following result.

\begin{prop} \label{prop:densestforstable}
Let~$n \in \N$ and~$s \in (0, 1)$. Let~$E \subset \R^{n + 1}$ be a stable set for the~$s$-perimeter. Then,
$$
\big| {E \cap B_\rho(q)} \big| \ge c_\flat \, \rho^{n + 1} \quad \mbox{and} \quad \big| {B_\rho(q) \setminus E} \big| \ge c_\flat \, \rho^{n + 1} \quad \mbox{for every } q \in \partial E \mbox{ and } \rho > 0,
$$
for some constant~$c_\flat > 0$ depending only on~$n$ and~$s$.
\end{prop}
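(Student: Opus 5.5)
The plan is to adapt the classical De Giorgi-type iteration of~\cite[Theorem~4.1]{CRS10}, which is built on the minimality of~$E$, to stable sets. Minimality will be replaced by two ingredients quoted just before the statement: the uniform perimeter estimates of~\cite{CSV19} for stable sets, and the fractional perimeter bounds of~\cite{T26}. By scaling invariance (dilations preserve stability, and the constant must be scale free), it is enough to fix~$q \in \partial E$ and prove the lower bound for~$V(\rho) \coloneqq |E \cap B_\rho(q)|$. The bound for the complement then follows by applying the same argument to~$\R^{n + 1} \setminus E$, which is also stable because Definition~\ref{def:stability} is symmetric under complementation: the roles of unions and intersections swap.

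The key steps are as follows.

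\textbf{Step 1.} From~\cite{CSV19}, stability gives the uniform bound~$\Per_s(E; B_R(q)) \le C R^{n + 1 - s}$ for every~$R > 0$, and in fact a bound on the classical perimeter of the form~$\Per(E; B_R(q)) \le C R^{n}$.

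\textbf{Step 2.} Use the Euler--Lagrange equation. Stability includes stationarity, so~$\mathrm H_{s, E} = 0$ holds in the weak/viscosity sense. Comparing~$E$ with the competitor~$E \setminus B_r(q)$ via first variation is not available. Instead I will use the local perimeter bounds of~\cite{T26}, which I expect to give an estimate of the form
$$
\Per_s \big( E \cap B_r(q); \R^{n + 1} \big) \le C \int_{E \cap B_r(q)} \int_{\R^{n + 1} \setminus (E \cup B_r(q))} \frac{\dd x \dd y}{|x - y|^{n + 1 + s}} + C\, r^{n + 1 - s}\,\text{(error)}.
$$
The goal is the CRS-type inequality~$\Per_s(E\cap B_r) \le C \int_{E \cap B_r} \int_{B_r^c} |x-y|^{-n-1-s}$.

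\textbf{Step 3.} Combine this with the fractional isoperimetric inequality~$V(r)^{\frac{n + 1 - s}{n + 1}} \le C \Per_s(E \cap B_r)$. Bound the right-hand side by~$C\int_0^r V'(t)(r - t)^{-s}\dd t$ through the coarea formula in the radial variable. Integrating in~$r$ then yields the differential inequality
$$
\left( \int_0^r V(t)^{\frac{n + 1 - s}{n + 1}} \dd t \right) \le C r^{1 - s} V(r),
$$
from which the standard ODE lemma gives~$V(r) \ge c\, r^{n + 1}$, provided~$V(r) > 0$ for every~$r$. This positivity holds since~$q \in \partial E = \partial^- E$.

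The main obstacle is Step~2. Stability only controls second variations, so it is not obvious how to obtain the cutting inequality, which in~\cite{CRS10} comes from testing minimality against~$E \setminus B_r$. My first attempt will rely on~\cite{T26}, assuming it supplies exactly an energy bound of this type for stationary or stable sets, possibly with an additive error controlled by Step~1's bound~$C r^{n + 1 - s}$. If that error is only of order~$r^{n+1-s}$ rather than vanishing, it would spoil the iteration at small density. In that case I would instead argue by contradiction through compactness: if the densities degenerate along a sequence, rescale so that~$V(1) \to 0$ with~$q \in \partial E$. The uniform perimeter bounds of Step~1 give~$L^1_{\loc}$ convergence to a stable limit, and stability passes to the limit by~\cite{CSV19}. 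The limit would be empty near the origin, which contradicts the fact that a uniform (clean ball) density of the boundary is preserved.
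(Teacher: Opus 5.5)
There is a genuine gap, and it sits where you suspect. The cutting inequality $\Per_s(E\cap B_r)\le C\int_{E\cap B_r}\int_{\R^{n+1}\setminus B_r}|x-y|^{-n-1-s}\,\dd x\,\dd y$ of \cite{CRS10} comes from testing minimality against the non-infinitesimal competitor $E\setminus B_r$. Stability is a second-order condition along smooth flows and gives no access to it. The results of \cite{T26} that the paper uses are of a different nature (see below), so your Steps~2--3 have no input.

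Your compactness fallback is circular. From $\Per(E_k;B_R)\le CR^n$ you do get $E_k\to E_\infty$ in $L^1_{\loc}$ with $|E_\infty\cap B_1|=0$. But $L^1_{\loc}$ convergence does not carry the boundary point $0\in\partial E_k$ over to $\partial E_\infty$: small pieces of $E_k$ near the origin may simply disappear in the limit. Excluding this is precisely the uniform density estimate you are trying to prove. Moreover, $E_\infty$ being empty near the origin contradicts nothing about stability. Note also that Step~1 records only \emph{upper} bounds on perimeters, and these alone can never produce a lower bound on volume fractions.

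The missing ingredient is a \emph{lower} bound on the fractional perimeter at boundary points; with it, no iteration is needed. The paper combines three facts:
(i) $\Per(E;B_\rho(q))\le C\rho^n$, from \cite[Corollary~1.8]{CSV19};
(ii) $\Per_s(E;B_R(q))\ge C^{-1}R^{n+1-s}$ for all $R>0$, obtained by arguing as in \cite[Theorem~1.1]{T26} at points $q\in\partial^\ast E$ (meaningful thanks to (i)) and extended to all of $\partial E$ by density of the reduced boundary;
(iii) the interpolation inequality of \cite[Corollary~3.2]{T26}, namely $\Per_s(E;B_{\varepsilon^{1/s}\rho/2}(q))\le C\big(m^{1-s}\Per(E;B_\rho(q))^s+\varepsilon^{\frac{n+1}{s}}\rho^{n+1-s}\big)$, where $m\coloneqq\min\{|E\cap B_\rho(q)|,|B_\rho(q)\setminus E|\}$ and $\varepsilon\in(0,\frac14]$.

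Plugging (i) and (ii) (with $R=\varepsilon^{1/s}\rho/2$) into (iii) gives $\rho^{n+1-s}\le C\big(\varepsilon^{-\frac{n+1-s}{s}}m^{1-s}\rho^{sn}+\varepsilon\rho^{n+1-s}\big)$. Absorbing the last term for $\varepsilon$ small yields $m\ge c\rho^{n+1}$. This gives both density bounds at once, with no complementation argument and no ODE lemma.

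This is also what your compactness idea would need in order to work. Together, (ii) and (iii) say exactly that a boundary point cannot sit where $E$ or its complement has vanishing volume fraction.
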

\begin{proof}
First of all, up to a translation, we may suppose that~$q = 0$. Moreover, within this proof we shall indicate with~$C$ a constant larger than~$1$, whose value may change from line to line and depends only on~$n$ and~$s$.

By~\cite[Corollary~1.8]{CSV19}, we know that
\begin{equation} \label{upperPerest}
\Per(E; B_\rho) \le C \rho^{n} \quad \mbox{for every } \rho > 0,
\end{equation}
where~$\Per(E; \Omega)$ denotes the classical perimeter of~$E$ inside an open set~$\Omega \subset \R^{n + 1}$. In consequence of~\eqref{upperPerest}, the set~$E$ has locally finite perimeter.  We are then free to assume that~$0 \in \partial^\ast \! E$---indeed, the general case follows by approximation, exploiting the fact that the reduced boundary~$\partial^\ast \! E$ is dense in~$\partial E$. In light of these considerations, we may argue as in the proof of~\cite[Theorem~1.1]{T26} and deduce that
\begin{equation} \label{lowerPersest}
\Per_s(E; B_R) \ge C^{-1} R^{n + 1 - s} \quad \mbox{for every } R > 0.
\end{equation}

Recall now the interpolation inequality of~\cite[Corollary~3.2]{T26}, which gives that
$$
\Per_s \! \Big( {E; B_{\frac{\varepsilon^{1/s} \rho}{2}}} \Big) \le C \Big( {\min \big\{ {|E \cap B_\rho|, |B_\rho \setminus E|} \big\}^{1 - s} \Per(E; B_\rho)^s + \varepsilon^{\frac{n + 1}{s}} \rho^{n + 1 - s}} \Big),
$$
for all~$\varepsilon \in \left( 0, \frac{1}{4} \right]$. By combining this with~\eqref{upperPerest} and~\eqref{lowerPersest} (with~$R = \frac{\varepsilon^{1/s} \rho}{2}$), we obtain
$$
\rho^{n + 1 - s} \le C \Big( {\varepsilon^{- \frac{n + 1 - s}{s}} \min \big\{ {|E \cap B_\rho|, |B_\rho \setminus E|} \big\}^{1 - s} \rho^{s n} + \varepsilon \rho^{n + 1 - s}} \Big).
$$
By taking~$\varepsilon$ suitably small, in dependence of~$n$ and~$s$ alone, we can reabsorb to the left-hand side the last term on the right and easily obtain the claims of the proposition.
\end{proof}

In our second result, we establish density estimates for general nonlocal minimal surfaces (not necessarily minimisers, nor stable sets), but only at points of their boundaries that can be touched by a ball and only at scales smaller than the radius of said ball. Its precise statement is as follows.

\begin{prop} \label{prop:densestfors-stat}
Let~$n \in \N$ and~$s \in (0, 1)$. Let~$E \subset \R^{n + 1}$ be a measurable set such that~$\mathrm{H}_{s, E} \ge 0$ in the viscosity sense at some point~$q \in \partial E$. Assume that there exists a ball~$B$ of radius~$r > 0$ such that~$B \subset E$ and~$q \in \partial B$. Then,
$$
\big| {E \cap B_\rho(q)} \big| \ge c_\sharp \, \rho^{n + 1} \quad \mbox{and} \quad \big| {B_\rho(q) \setminus E} \big| \ge c_\sharp \, \rho^{n + 1} \quad \mbox{for every } \rho \in (0, r],
$$
for some constant~$c_\sharp > 0$ depending only on~$n$ and~$s$.
\end{prop}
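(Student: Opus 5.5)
The plan is to derive the estimate for~$E$ directly from the containment~$B \subset E$, and the estimate for the complement from the sign condition at~$q$, which is made quantitative by splitting the integral into an inner ball, an annulus, and an exterior region. Up to a translation and rotation, I take~$q = 0$ and~$B = B_r(r e_1)$.

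\emph{The density of~$E$.} This part is elementary. Since~$B \subset E$ and~$0 \in \partial B$, for every~$\rho \in (0, r]$ we have~$|E \cap B_\rho| \ge |B_r(re_1) \cap B_\rho|$. The intersection~$B_r(r e_1) \cap B_\rho$ contains a ball of radius comparable to~$\rho$, for instance~$B_{\rho/4}(\frac{\rho}{2} e_1)$, because~$\rho \le r$. This gives~$|E \cap B_\rho| \ge c_n \rho^{n+1}$.

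\emph{The density of the complement.} Since~$E$ is touched from inside by a ball at~$0$, Proposition~\ref{prop:visc-equiv} shows that~$\mathrm H_{s,E}(0)$ exists as a principal value and satisfies~$\mathrm H_{s,E}(0) \ge 0$. Suppose that~$|B_\rho \setminus E| = \eta \rho^{n+1}$ for some~$\rho \in (0, r]$. I fix a parameter~$\lambda \in (0, 1)$, to be chosen depending only on~$n$ and~$s$, and split the integral defining~$\mathrm H_{s,E}(0)$ into three regions, estimating each from above.

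\begin{enumerate}[leftmargin=23pt,label=$(\alph*)$]
\item \emph{The exterior region~$\R^{n+1} \setminus B_\rho$.} The integrand is at most~$|y|^{-n-1-s}$, so this contribution is at most~$C \rho^{-s}$.
\item \emph{The inner ball~$B_{\lambda\rho}$.} Since~$B \subset E$, the principal value over~$B_{\lambda\rho}$ of~$\chi_{E^c} - \chi_E$ is at most the same principal value with~$E$ replaced by~$B$. By symmetry, only the thin lens between~$\partial B$ and its tangent hyperplane contributes. This gives a bound of~$C (\lambda\rho)^{1-s}/r$, which is at most~$C \rho^{-s}$ because~$\lambda \rho \le r$.
\item \emph{The annulus~$B_\rho \setminus B_{\lambda\rho}$.} Here I write~$\chi_{E^c} - \chi_E = -1 + 2\chi_{E^c}$. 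The first term contributes~$-c_{n,s}(\lambda^{-s} - 1)\rho^{-s}$. The second term contributes at most~$2\eta\rho^{n+1}(\lambda\rho)^{-n-1-s} = 2\eta\lambda^{-n-1-s}\rho^{-s}$.
\end{enumerate}

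Summing the three bounds gives
$$
0 \le \mathrm H_{s,E}(0) \le \rho^{-s}\Big( C - c_{n,s}(\lambda^{-s}-1) + 2\eta \lambda^{-n-1-s}\Big).
$$
I first choose~$\lambda$ small, depending only on~$n$ and~$s$, so that~$c_{n,s}(\lambda^{-s}-1) \ge 2C$. Then, if~$\eta < C\lambda^{n+1+s}/2$, the right-hand side is negative, which is a contradiction. Hence~$\eta \ge c_\sharp$ for a constant~$c_\sharp$ depending only on~$n$ and~$s$.

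The only step requiring some care is~$(b)$. There I must verify that the principal value over~$B_{\lambda\rho}$ of~$\chi_{B^c} - \chi_B$, for a ball of radius~$r \ge \lambda\rho$ touching at~$0$, is bounded by~$C(\lambda\rho)^{1-s}/r$ uniformly in~$r$. The set where~$\chi_{B^c} - \chi_B$ differs from its antisymmetric reflection lies within distance~$C|y|^2/r$ of the tangent hyperplane. Integrating~$|y|^{-n-1-s}$ over this region in~$B_{\lambda\rho}$ gives the stated order. This is the same computation already used in the proof of Proposition~\ref{prop:visc-equiv}, but with explicit dependence on~$r$.
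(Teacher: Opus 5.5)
Your proof is correct, and it takes a genuinely different route from the paper's.

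\textbf{The paper's route.} The paper normalises to $q=0$ and $B=B_1(e_{n+1})$. It then uses the exact identity
$$
\mathrm H_{s,E}(0)=\mathrm H_{s,B}(0)-2\int_{E\setminus B}|y|^{-n-1-s}\,\dd y,
$$
so that $\mathrm H_{s,E}(0)\ge 0$ gives the scale-free weighted bound $\int_{E\setminus B}|y|^{-n-1-s}\,\dd y\le\gamma_{n,s}\coloneqq\frac12\mathrm H_{s,B}(0)$. This yields $|E\cap(B_\rho\setminus B)|\le\gamma_{n,s}\rho^{n+1+s}$. Since $B$ lies in a half-space, it follows that $|E\cap B_\rho|\le(\gamma_{n,s}\rho^s+\frac12|B_1|)\rho^{n+1}\le\frac34|B_1|\rho^{n+1}$, but only for $\rho\le\rho_0(n,s)$. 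The remaining range $\rho\in[\rho_0,1]$ is recovered by monotonicity, at the cost of a worse constant.

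\textbf{Your route.} You use the interior ball only to control the singular part of the integral on $B_{\lambda\rho}$. The contradiction comes from the large negative contribution $-c_{n,s}\lambda^{-s}\rho^{-s}$ of the annulus when $E$ almost fills $B_\rho$. Because your small parameter is $\lambda$ rather than $\rho$, all $\rho\in(0,r]$ are handled at once, with no second regime. You also never need the fact that $B$ occupies only half of $B_\rho$. The price is the explicit lens estimate in $(b)$ with the correct dependence on $r$. The paper instead only needs that $\mathrm H_{s,B}(0)$ is a finite positive constant, which follows from scaling.

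\textbf{A justification to fix in $(b)$.} Your whole argument hinges on the bound in $(b)$ being independent of $\lambda$. Your stated reason, ``because $\lambda\rho\le r$'', only gives $(\lambda\rho)^{1-s}/r\le(\lambda\rho)^{-s}$. That bound is of the same order $\lambda^{-s}\rho^{-s}$ as the annulus gain, so it would not allow you to choose $\lambda$. The correct reason is $\rho\le r$ together with $\lambda<1$, which gives $(\lambda\rho)^{1-s}/r\le\lambda^{1-s}\rho^{-s}\le\rho^{-s}$. With this, the conclusion holds as you stated it.
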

Our argument is reminiscent of the one used for the ``cheap'' proof of the weak Harnack inequality for equations driven by the fractional Laplacian---see, e.g.,~\cite[Theorem~2.2]{RS19} or~\cite[Proposition~2.3.4]{BV16}.

\begin{proof}[Proof of Proposition~\ref{prop:densestfors-stat}]

Up to a translation, a rotation, and a dilation of~$\R^{n + 1}$, we may assume that~$q = 0$,~$r = 1$, and~$B = B_1(e_{n + 1})$. Let then~$\rho \in (0, \rho_0]$, with~$\rho_0 \in (0, 1]$ soon to be chosen small but in dependence of~$n$ and~$s$ only. Thanks to Proposition~\ref{prop:visc-equiv} (and the discussion preceding its statement), we know that~$\mathrm{H}_{s, E}(0)$ is well-defined, finite, and non-negative. Hence,
$$
0 \le \mathrm{H}_{s, E}(0) = \mathrm{H}_{s, E}(0) - \mathrm{H}_{s, B}(0) + \mathrm{H}_{s, B}(0) = - 2 \int_{E \setminus B} \frac{\dd y}{|y|^{n + 1 + s}} + \mathrm{H}_{s, B}(0),
$$
that is,
$$
\gamma_{n, s} \ge \int_{E \setminus B_1(e_{n + 1})} \frac{\dd y}{|y|^{n + 1 + s}},
$$
where~$\gamma_{n, s} \coloneqq \frac{1}{2} \, \mathrm{H}_{s, B}(0)$, positive constant depending only on~$n$ and~$s$. From here, we simply estimate
$$
\gamma_{n, s} \ge \int_{E \cap \big( {B_\rho \setminus B_1(e_{n + 1})} \big)} \frac{\dd y}{|y|^{n + 1 + s}} \ge \frac{\big| {E \cap \big( {B_\rho \setminus B_1(e_{n + 1})} \big)} \big|}{\rho^{n + 1 + s}}.
$$
This and the fact that~$B_1(e_{n + 1}) \subset \R^{n + 1}_+$ give that
\begin{align*}
\big| {E \cap B_\rho} \big| & = \big| {E \cap \big( {B_\rho \setminus B_1(e_{n + 1})} \big)} \big| + \big| {E \cap B_\rho \cap B_1(e_{n + 1})} \big| \\
& \le \gamma_{n, s} \, \rho^{n + 1 + s} + \big| {B_\rho \cap \R^{n + 1}_+} \big| = \Big( {\gamma_{n, s} \, \rho^s + \frac{1}{2} \, |B_1|} \Big) \rho^{n + 1} \le \frac{3}{4} \, |B_1| \, \rho^{n + 1},
\end{align*}
provided we take~$\rho_0 \coloneqq \min \Big\{ {\Big( {\frac{|B_1|}{4 \gamma_{n, s}}} \Big)^{\! \frac{1}{s}}, 1} \Big\}$. Consequently,
$$
\big| {B_\rho \setminus E} \big| = \big| {B_\rho} \big| - \big| {E \cap B_\rho} \big| \ge \frac{1}{4} \, |B_1| \, \rho^{n + 1} \quad \mbox{for every } \rho \in (0, \rho_0].
$$
Note that, up to making the constant in front of~$\rho^{n + 1}$ smaller, the same estimate also holds for~$\rho \in [\rho_0, 1]$, since
$$
\big| {B_\rho \setminus E} \big| \ge \big| {B_{\rho_0} \setminus E} \big| \ge \frac{1}{4} \, |B_1| \, \rho_0^{n + 1} \ge \Big( {\frac{1}{4} \, |B_1| \, \rho_0^{n + 1}} \Big) \rho^{n + 1} \quad \mbox{for every } \rho \in (\rho_0, 1].
$$
Finally, the bound from below on the measure of~$E \cap B_\rho$ follows trivially from the inclusion~$B_1(e_{n + 1}) \subset E$.
\end{proof}

\section{Proof of Theorem~\texorpdfstring{\ref{thm:s-halfspace}}{\ref{thm:s-halfspace}} for~\texorpdfstring{$n = 1$}{n = 1}} \label{sec:1dproof}

\noindent
We provide here the proof of the following statement, which yields in particular Theorem~\ref{thm:s-halfspace} in the case~$n = 1$.

\begin{thm} \label{thm:techthm1}
    Let~\(s\in (0,1)\). Let~\(E\subset \R^2\) be a measurable set contained in a half-plane and that satisfies~\(\mathrm H_{s,E}\leq 0\) in the viscosity sense. Then,~\(E\) is a half-plane. 
\end{thm}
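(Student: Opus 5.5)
Following the strategy of Hoffman and Meeks sketched in the introduction, we use the barrier of Proposition~\ref{prop:barrier}\ref{barriercasei} together with the strong comparison principle of Proposition~\ref{prop:compprinc}.

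\textbf{Normalisation.} After a rigid motion, assume $E \subset \{x_2 < 0\}$ and that $a_0 \coloneqq \inf\{a : E \subset \{x_2 < a\}\} = 0$. If $a_0 = -\infty$, then $E$ is empty up to a null set, which we discard as degenerate. Our aim is to show $E = \{x_2<0\}$. Since $\partial E$ is unrestricted, the subgraph $E$ and the half-plane $\{x_2<0\}$ are compared directly via Proposition~\ref{prop:compprinc}. The half-plane has smooth boundary and zero $s$-mean curvature. So if $\partial E$ meets $\{x_2=0\}$ at some point, then $E = \{x_2<0\}$ and we are done. Assume therefore that $\partial E \cap \{x_2 = 0\} = \varnothing$.

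\textbf{Setting up the barrier.} Fix $\alpha \in (0,s)$ and let $R$ be as in Proposition~\ref{prop:barrier}\ref{barriercasei}. Set $F_\varepsilon \coloneqq \varepsilon F_{1,\alpha}$, the subgraph of $\varepsilon^{1-\alpha}|x_1|^\alpha$. By scaling, $\mathrm H_{s,F_\varepsilon} > 0$ on $\partial F_\varepsilon \setminus B_{\varepsilon R}$. Suppose we can find $\delta>0$ with $\partial E \cap (B_{2R'} \times$ a strip$)$ lying below $x_2 = -\delta$ for a suitable large radius $R' \ge R$. Compactness alone does not give this, since $\partial E$ may approach $\{x_2=0\}$ at infinity; we return to this point below. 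With such a $\delta$, translate to $E_\delta \coloneqq E + \delta e_2 \subset \{x_2<\delta\}$, which stays below $\{x_2<0\}$ near the origin.

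\textbf{Sliding argument.} Let $\varepsilon_0$ be the infimum of those $\varepsilon\in(0,1]$ for which $E_\delta \subset F_\varepsilon$. For $\varepsilon = 1$ the inclusion holds once $R'$ is chosen so that $|x_1|^\alpha \ge \delta$ outside the region where $E_\delta \subset \{x_2<0\}$. If $\varepsilon_0 > 0$, then $\partial F_{\varepsilon_0}$ touches $\partial E_\delta$. The touching cannot escape to infinity, because $\varepsilon_0^{1-\alpha}|x_1|^\alpha \to \infty$ while $E_\delta \subset \{x_2<\delta\}$. It also cannot happen inside $B_{\varepsilon_0 R}$, provided $\partial F_{\varepsilon}\cap B_{\varepsilon R}$ lies in $\{x_2\ge 0\}$ region where $E_\delta$ is absent; this holds because $F_\varepsilon \supset \{x_2<0\}$ and the touching point must satisfy $x_2 \ge 0$. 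Comparing the smooth supersolution $F_{\varepsilon_0}$ with the viscosity subsolution $E_\delta$ through Proposition~\ref{prop:compprinc} then gives $E_\delta = F_{\varepsilon_0}$, which is impossible. Hence $\varepsilon_0 = 0$. Letting $\varepsilon \to 0$, the sets $F_\varepsilon$ decrease to $\{x_2 \le 0\}$, so $E \subset \{x_2 < -\delta\}$, contradicting $a_0 = 0$.

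\textbf{Main obstacle.} The weak point is finding $\delta$ when $\partial E$ merely approaches $\{x_2=0\}$ at infinity and touches it nowhere. We handle this by replacing the origin with a suitable base point. Choose a point $p\in\partial E$ with $p_2 > -\eta$, translate horizontally so that $p_1 = 0$, and run the sliding argument with the barrier centred there. The touching point can be located at $p$, or it lies outside $B_{\varepsilon R}$ with $x_2 \ge -\eta + \delta$. Alternatively, we use $E \subset \{x_2<0\}$ to guarantee directly that no point of $\partial E_\delta$ lies above $\partial F_\varepsilon$ in $B_{\varepsilon R}$, by taking $\delta$ smaller than the minimum of $\varepsilon^{1-\alpha}|x_1|^\alpha$ on the relevant annulus. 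Making this quantitative, uniformly in $\varepsilon$, is the step that needs care. We expect the fix to be a double-parameter family: first fix $\delta$, then shrink the barrier only until it hits the graph $x_2 = \delta$ of the region near the base point.
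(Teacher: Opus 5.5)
Your skeleton is the same as the paper's: normalise to $E\subset\{x_2<0\}$ with $\dist(E,\{x_2=0\})=0$, rule out contact with the line via Proposition~\ref{prop:compprinc}, lift by $\delta$, slide $F_\varepsilon$ down, and contradict positivity at a touching point outside $B_{\varepsilon R}$. But the proposal is incomplete exactly at the step you call the ``main obstacle'', and the reason you give for it is wrong. You never need $E$ to stay away from $\{x_2=0\}$ at infinity. Far-away touching points are excluded because $\mathrm H_{s,F_\varepsilon}>0$ outside $B_{\varepsilon R}$, not by any separation of $E$ from the line. What you need is only a \emph{fixed bounded} neighbourhood of the base point that $E$ avoids, and this is immediate. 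Since $0\notin\partial E$, and $0$ is not in the measure-theoretic interior of $E$ (half of every ball around $0$ lies outside $E$), the point $0$ is in the measure-theoretic exterior. With the standing normalisation this gives $r\in(0,1)$ with $([-r,r]\times[-2r,0])\cap E=\varnothing$. Equivalently, in your own set-up: for any fixed $R'$, compactness of $\partial E\cap([-2R',2R']\times[-1,0])$ gives $\delta$ for $\partial E$. A connectedness argument then passes from $\partial E$ to $E$: the region $(-2R',2R')\times(-\delta,\infty)$ misses $\partial E$ and meets the exterior, so it lies in the exterior.

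Once $r$ is fixed, the ``uniformity in $\varepsilon$'' issue disappears if you restrict to $\varepsilon\in(0,r/R]$, so that $B_{\varepsilon R}\subset B_r$ for every relevant $\varepsilon$. Then choose $\delta<rR^{\alpha-1}$, which guarantees $E_\delta\subset F_{r/R}$: points of $E_\delta$ with $|x_1|\le r$ have $x_2<-r$, and points with $|x_1|>r$ have $x_2<\delta\le (r/R)^{1-\alpha}r^\alpha$. Any touching point $p$ then has $p_2=\varepsilon_0^{1-\alpha}|p_1|^\alpha\ge 0$, hence $|p_1|\ge r\ge\varepsilon_0 R$, and the contradiction follows as you describe.

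The alternative fixes you sketch are not needed and, as written, are not arguments. These are recentring at some $p\in\partial E$ with $p_2>-\eta$, bounding $\delta$ by the minimum of $\varepsilon^{1-\alpha}|x_1|^\alpha$ on an annulus, and a ``double-parameter family''. In particular, that annulus minimum tends to $0$ as $\varepsilon\to0$, so it can never yield a $\delta$ independent of $\varepsilon$. The region near the base point has to be controlled by the absence of $E$ there, not by the height of the barrier.
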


\begin{proof}
    For the sake of contradiction, assume that \(E\) is not a half-plane. By rotating and translating, we may assume without loss of generality that~\(E\subset \R^2_- \coloneqq \R \times (-\infty, 0)\). By further translating vertically, we may also assume that
    \begin{equation} \label{distEhalf=0}
        \dist ( E, \partial \R^2_-)=0.
    \end{equation}
    
    Observe that~\(\partial E \cap \partial \R^2_- = \varnothing\), since, otherwise, the strong comparison principle of Proposition~\ref{prop:compprinc} would imply that \(E=\R^2_-\). As a result, there exists a small~$r \in (0, 1)$ such that~$[- r, r] \times [- 2r, 0]$ has empty intersection with~$E$.

    Let now~$\alpha \in (0, 1)$ be fixed,~$F = F_{1, \alpha}$ be the set defined in~\eqref{Fdef}, and~$F_\varepsilon \coloneqq \varepsilon F$ be its rescaling of a factor~$\varepsilon > 0$. Note that
    \begin{equation} \label{Fepsexpr}
        F_\varepsilon = \Big\{ {(x_1, x_2) \in \R^2 \text{ s.t. } x_2< \varepsilon^{1 - \alpha} \vert x_1\vert^\alpha} \Big\}.
    \end{equation}
    By Proposition~\ref{prop:barrier}\ref{barriercasei} and an immediate scaling argument, we know that $\mathrm H_{s, F_\varepsilon}(x) > 0$ for all~$x \in \partial F_{\varepsilon} \setminus B_{\varepsilon R}$, for some constant~$R \ge 1$ independent of~$\varepsilon$. Thus,
    \begin{equation} \label{Fepssuper}
        \mathrm H_{s, F_\varepsilon}(x) > 0 \quad \mbox{for every } x \in \partial F_\varepsilon \setminus B_r \mbox{ and } \varepsilon \in \left( 0, \frac{r}{R} \right]. 
    \end{equation}
    Consider then the vertical translation~$E_\delta \coloneqq E + (0, \delta)$, for~$\delta \in (0, r]$, and observe that
    \begin{equation} \label{ErinsideL}
        E_\delta \subset \big( {\R \times (-\infty, \delta)} \big) \setminus \big( {[-r, r] \times [- r, \delta]} \big).
    \end{equation}
    By the translation invariance of the~$s$-mean curvature, we clearly have that
    \begin{equation} \label{Edeltasubsol}
        \mathrm H_{s, E_\delta} \le 0 \quad \mbox{at every } x \in \partial E_\delta \mbox{ in the viscosity sense}.
    \end{equation}
    Moreover, it easily follows from~\eqref{ErinsideL} that
    \begin{equation} \label{ErsubsetofFr}
        E_\delta \subset F_\varepsilon \quad \mbox{for every } \varepsilon \ge \frac{r}{R},
    \end{equation}
    provided~$\delta$ is chosen smaller than~$r R^{\alpha - 1}$.
    
    Define
    $$
        \varepsilon_0 \coloneqq \inf \Big\{ {\varepsilon \in (0, 1) \text{ s.t. } E_\delta \subset F_\varepsilon} \Big\}.
    $$
    In view of~\eqref{ErsubsetofFr}, we have that~$\varepsilon_0$ is well-defined and belongs to~$\left[ 0, \frac{r}{R} \right]$. Our goal is to show that~$\varepsilon_0 = 0$. To this aim, we argue by contradiction and suppose~$\varepsilon_0 > 0$. From this assumption, it immediately follows that there exists a sequence of points~$p^{(k)} \in E_\delta \setminus F_{\varepsilon_0 - \frac{1}{k}}$, for~$k \in \N$ large. Note that this sequence is bounded, as, indeed, the combination of inclusion~\eqref{ErinsideL} with the definition of the sets~$F_{\varepsilon_0 - \frac{1}{k}}$ gives that~$|p^{(k)}| \le 4 \varepsilon_0^{1 - \frac{1}{\alpha}}$ for all large enough~$k$'s. Therefore, up to a subsequence,~$\{ p^{(k)} \}$ converges to some limit point~$p \in \overline{E_\delta} \setminus F_{\varepsilon_0}$. Since, by definition of~$\varepsilon_0$, it holds~$\overline{E_\delta} \subset \overline{F_{\varepsilon_0}}$, we infer that~$p$ belongs to the boundaries of both~$E_\delta$ and~$F_{\varepsilon_0}$. Furthermore, inclusion~\eqref{ErinsideL} and the fact that~$p \in \overline{E_\delta}$ yield that~$|p| \ge r$.
    
    All in all, we have found a point~$p \in \big( {\partial E_\delta \cap \partial F_{\varepsilon_0}} \big) \setminus B_r$ at which~$F_{\varepsilon_0}$ touches~$E_\delta$ from the outside and where, thanks to~\eqref{Fepssuper} and~\eqref{Edeltasubsol}, the inequalities~$\mathrm H_{s, F_{\varepsilon_0}} > 0$ and~$\mathrm H_{s, E_\delta} \le 0$ hold, respectively, in the pointwise and viscosity sense. This is, however, impossible, as can be seen by employing the comparison principle of Proposition~\ref{prop:compprinc} or by directly applying Definition~\ref{def:viscsol}. Anyway, we conclude that~$\varepsilon_0 = 0$, which, recalling its definition and~\eqref{Fepsexpr}, gives that
    $$
        E_\delta \subset \bigcap_{\varepsilon \in (0, 1)} F_\varepsilon \subset \overline{\R_-^2}.
    $$
    Going back to~$E$, this means that~$E \subset \overline{\R_-^2} - (0, \delta) = \R \times (-\infty, -\delta]$. Since this contradicts~\eqref{distEhalf=0}, we conclude that~$E$ is a half-space, thereby ending the proof.
\end{proof}

\section{Proof of Theorem~\texorpdfstring{\ref{thm:s-halfspace}}{\ref{thm:s-halfspace}} for a general~\texorpdfstring{$n \ge 1$}{n ≥ 1}} \label{sec:ndproof}

\noindent
In this section, we present the proof of Theorem~\ref{thm:s-halfspace} in full generality. Its claims will actually come as a corollary of the following more technical statement.

\begin{thm} \label{thm:techthm2}
	Let~$n \in \N$ and~$s \in (0, 1)$. Let~\(E\subset \R^{n + 1}\) be a measurable set contained in a half-plane and that satisfies~\(\mathrm H_{s,E} = 0\) in the viscosity sense. Assume that either of the following assumptions holds true:
	\begin{enumerate}[leftmargin=*,label=($\alph*$)]
	\item \label{tech-a} the complement of~$E$ enjoys uniform thick density estimates around every point of its boundary and at all scales, i.e., there exists a constant~$c_\star > 0$ such that
	$$
		\big| {B_\rho(q) \setminus E} \big| \ge c_\star \rho^{n + 1} \quad \mbox{for all } q \in \partial E \mbox{ and } \rho > 0,
	$$
	or
	\item \label{tech-b} $E$ has boundary of class~$C^1$ outside of a compact set.
	\end{enumerate}
	Then,~$E$ is a half-space.
\end{thm}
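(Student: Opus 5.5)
We follow the Hoffman--Meeks touching scheme of Theorem~\ref{thm:techthm1}, now using the corrected barrier $\widetilde F = F \setminus D$ announced in the introduction. We argue by contradiction and, after a rigid motion, assume $E \subset \R^{n+1}_- \coloneqq \R^n \times (-\infty,0)$ with $\dist(E, \partial \R^{n+1}_-) = 0$ and $E$ not a half-space.

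First we record two facts about $E$.
\begin{itemize}[leftmargin=15pt]
\item By Proposition~\ref{prop:compprinc} applied with $F = \R^{n+1}_-$ (a smooth, exact solution), $\partial E \cap \partial \R^{n+1}_- = \varnothing$. Hence there is $r \in (0,1)$ such that the cylinder $B'_r \times [-2r, 0]$ does not meet $E$.
\item There is a sequence of points $q_k \in \partial E$ whose heights tend to $0$, necessarily with $|q_k'| \to \infty$.
\end{itemize}
As in Section~\ref{sec:1dproof}, we fix $\alpha \in (0,s)$, set $F_\varepsilon \coloneqq \varepsilon F_{n,\alpha}$, and let $E_\delta \coloneqq E + \delta e_{n+1}$ with $\delta$ small. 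We then have $E_\delta \subset F_\varepsilon$ for $\varepsilon \ge r/R$. Define $\varepsilon_0$ as the infimum of the admissible $\varepsilon$.

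Suppose $\varepsilon_0 > 0$. The compactness argument of Theorem~\ref{thm:techthm1} carries over verbatim, because $F_{\varepsilon_0}$ diverges at infinity. It yields a touching point $p \in \partial E_\delta \cap \partial F_{\varepsilon_0}$ with $|p| \ge r$ and $|p| \le C \varepsilon_0^{1 - 1/\alpha}$. At $p$ we have $\mathrm H_{s,E_\delta} \le 0$ in the viscosity sense, since $F_{\varepsilon_0}$ is smooth there. So $\mathrm H_{s,\widetilde F}(p) \le 0$ for every $\widetilde F = F_{\varepsilon_0} \setminus D$ with $D \subset F_{\varepsilon_0} \setminus E_\delta$ away from $p$. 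This gives
\[
2\int_D \frac{\dd y}{|p-y|^{n+1+s}} \le -\mathrm H_{s,F_{\varepsilon_0}}(p) \le C \varepsilon_0^{-s}\Big(\frac{|p|}{\varepsilon_0}\Big)^{\alpha-1-s},
\]
by scaling Proposition~\ref{prop:barrier}\ref{barriercaseii}. The contradiction therefore comes from producing a region $D \subset F_{\varepsilon_0} \setminus E_\delta$ whose weighted measure, seen from $p$, beats this bound.

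Producing $D$ is the main obstacle. The natural candidate is the bulk of the complement of $E_\delta$ near $\partial E_\delta$, which lies below $\partial F_{\varepsilon_0}$. The plan is a dichotomy in the spirit of~\cite{CV20}.
\begin{itemize}[leftmargin=15pt]
\item \emph{Graph-type case.} If near $p$, at scale comparable to $|p|$, the complement of $E_\delta$ occupies a fixed proportion of the ball $B_{|p|/2}(p)$, then that region, intersected with $F_{\varepsilon_0}$, has measure $\gtrsim |p|^{n+1}$. Its contribution is $\gtrsim |p|^{-s}$, which beats $\varepsilon_0^{1+s-\alpha}|p|^{\alpha-1-s}$ for $\varepsilon_0$ small relative to $|p|$. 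Hypothesis~\ref{tech-a} furnishes exactly this: $|B_\rho(p) \setminus E_\delta| \ge c_\star \rho^{n+1}$. The only issue is that part of this complement may lie above $\partial F_{\varepsilon_0}$. To handle this, we take $\rho$ comparable to $|p|$ and use that $F_{\varepsilon_0}$ contains a large portion of $B_\rho(p)$ on the side facing $E$: $F_{\varepsilon_0}$ is almost flat there, with slope $\sim \varepsilon_0^{1-\alpha}|p|^{\alpha-1}$.
\item \emph{Catenoid-type case.} If instead $E_\delta$ fills most of $B_\rho(p)$ below the barrier, we use the strip $\R^n \times [-r, \delta]$ near the origin, which lies in $F_{\varepsilon_0} \setminus E_\delta$ by~\eqref{ErinsideL}. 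This strip is only a bounded set, at distance $|p|$, contributing $\sim r^{n+1}|p|^{-n-1-s}$, so we would enlarge it. We would try to show that $\R^{n+1}_- \setminus E$ contains a region of size comparable to $|p|^{n+1}$ within distance $C|p|$ of $p$. Heuristically, if $E$ fills a neighbourhood of $p$ at scale $|p|$ while missing the cylinder over $B'_r$, its boundary must wander, producing thick complement.
\end{itemize}

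Under hypothesis~\ref{tech-b} we would reduce to~\ref{tech-a} as follows. Outside a compact set $\partial E$ is $C^1$, so at boundary points $q_k$ far away we have touching balls from inside: these come from $\partial F$-type comparison sets, or from the interior ball at the lowest points. Proposition~\ref{prop:densestfors-stat} then gives thick density of the complement at the scales needed. For points in the compact region, the density bounds follow from regularity and compactness.

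Once $\varepsilon_0 = 0$ is established, $E_\delta \subset \overline{\R^{n+1}_-}$, so $E \subset \{x_{n+1} \le -\delta\}$, which contradicts $\dist(E, \partial \R^{n+1}_-) = 0$. I expect the quantitative choice of $D$ in the catenoid-type case, uniformly as $\varepsilon_0 \to 0$, to be the delicate step.
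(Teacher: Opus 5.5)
Your outer framework is the paper's: touch $E_\delta$ from above with $F_\varepsilon=\varepsilon F_{n,\alpha}$, then delete from $F_{\varepsilon_0}$ a set $D\subset F_{\varepsilon_0}\setminus E_\delta$ whose weighted mass beats $|\mathrm H_{s,F_{\varepsilon_0}}(p)|\lesssim\varepsilon_0^{1-\alpha}|p|^{\alpha-1-s}$. However, the construction of $D$, which is the whole difficulty, has a genuine gap.

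In your first case, hypothesis (a) applied at the touching point $p$ at scale $\rho\sim|p|$ carries no information. Since $E_\delta\subset F_{\varepsilon_0}$, the set $B_\rho(p)\setminus E_\delta$ always contains $B_\rho(p)\setminus F_{\varepsilon_0}$, which is roughly half the ball because the barrier is nearly flat at that scale. So the bound $c_\star\rho^{n+1}$ can be realised entirely above $\partial F_{\varepsilon_0}$, and your conclusion that the complement intersected with $F_{\varepsilon_0}$ has measure $\gtrsim|p|^{n+1}$ does not follow. The missing idea is to use the density estimate at a \emph{different} boundary point $q$. Take $q$ in a thin cone $p+\{x_{n+1}<-\ell|x'|\}$ with $|q-p|<4|p|$, and work at the scale $(p-q)_{n+1}/2$. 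Then the whole ball $B_{(p-q)_{n+1}/2}(q)$ lies in the wider cone $p+\{x_{n+1}<-\frac{\ell}{4}|x'|\}$, which lies in $F_\varepsilon$ provided $\ell$ is much larger than the barrier's slope near $p$. That ball is within distance $\frac{4}{\ell}(p-q)_{n+1}$ of $p$, so its part outside $E$ contributes $\gtrsim\ell^{n+2}|p|^{-s}$. This beats $|p|^{\alpha-1-s}$ only if $\ell^{n+2}|p|^{1-\alpha}\to\infty$. This is why a fixed $\delta$ is not enough. The paper uses shifts $1/j\to0$, so that $\varepsilon_j\le j^{-1/(1-\alpha)}\to0$ (slope $\le 1/j$), and the touching points satisfy $|p_j|\to\infty$ because $\partial E\cap\partial\R^{n+1}_-=\varnothing$. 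It then picks $\ell_j\to0$ with $j\ell_j\to\infty$ and $\ell_j^{n+2}|p_j|^{1-\alpha}\to\infty$. If the cone avoids $\partial E_j$ without lying in $E_j$, it lies in the complement, and a ball of radius $|p_j|/8$ inside it serves as $D$.

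The remaining alternative is $p_j+(\Lambda_j\cap B_{4|p_j|})\subset E_j$, with $\Lambda_j=\{x_{n+1}<-\ell_j|x'|\}$. This cannot be handled by finding thick complement as you suggest. A nearly flat subgraph sitting just under the barrier leaves only a thin layer of $F_{\varepsilon_j}\setminus E_j$, so the heuristic that the boundary \emph{must wander} has no basis. The paper excludes this case through the equation itself. After rescaling by $2|p_j|$, the set is trapped in $B_2$ between $\{x_{n+1}<-2\ell_j\}$ and $\{x_{n+1}<1/j\}$. The $\varepsilon$-regularity theory for flat viscosity solutions then makes $E$ the subgraph of a $C^{1,s/2}$ function $\varphi$ on all of $\R^n$, with $[\nabla\varphi]_{C^{s/2}(B'_{|p_j|/4})}\lesssim|p_j|^{-s/2}\to0$. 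Hence $E$ is a half-space; this step uses both $\ell_j\to0$ and $|p_j|\to\infty$.

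Finally, your reduction of (b) to (a) fails. A $C^1$ boundary need not admit interior touching balls, nothing is assumed inside the exceptional compact set, and local regularity does not give uniform density bounds at all scales. The paper does not reduce (b) to (a). Instead it slides the \emph{ice cream cones} $\bigcup_{x\in(\Lambda_j\cap B_r)+p_j}B_{(p_j-x)_{n+1}/4}(x)$. It uses $C^1$ regularity only near $p_j$, to know these sets lie in $E_j$ for small $r$. It then enlarges $r$ up to the first contact with $\partial E_j$, which occurs for some $r\le 4|p_j|$ by the previous alternative. The contact point is touched from inside by a ball of radius comparable to its depth below $p_j$. Proposition~\ref{prop:densestfors-stat} then gives the complement density exactly up to that radius, which is all the argument needs.
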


Observe that the hypotheses of Theorem~\ref{thm:techthm2} are satisfied in particular by smooth nonlocal minimal surfaces---for which~\ref{tech-b} clearly holds true. Thus, Theorem~\ref{thm:s-halfspace} follows. On the other hand, its hypotheses are also fulfilled by stable sets for the fractional perimeter---since the density estimates of~\ref{tech-a} are valid for such sets, thanks to Proposition~\ref{prop:densestforstable}

\begin{proof}[Proof of Theorem~\ref{thm:techthm2}]
For readability purposes, we divide the argument into several steps. Steps~1-4 are carried out under the sole assumptions that~$E$ is contained in a half-space and has vanishing~$s$-mean curvature in the viscosity sense. In the fifth step, we address the proof of a claim made in Step 4: it is split into Steps~5a and~5b, each establishing the claim under the extra hypotheses~\ref{tech-a} or~\ref{tech-b}. The argument is closed by the final Step~6, common to both cases.

Steps~1,~2, and~6 follow a path similar to the proof of Theorem~\ref{thm:techthm1}. The three additional steps~3,~4, and~5 involve further ideas, that are not required in the two-dimensional case, owing to the presence of a simply constructed supersolution, as per Proposition~\ref{prop:barrier}\ref{barriercasei}.

We now begin the actual proof of Theorem~\ref{thm:techthm2}.
\begin{itemize}[leftmargin=0pt]
\item[] \textbf{Step 1. Setup of the argument.} 
We argue by contradiction and suppose that
\begin{equation} \label{Enothalfspace}
E \mbox{ is not a half-space}.
\end{equation}
After a rotation and a translation, we may assume that~$E \subset \R^{n + 1}_- \coloneqq \R^n \times (-\infty, 0)$ and that
\begin{equation} \label{Eclosetohorhyp}
\dist \Big( {E, \partial \R^{n + 1}_-} \Big) = 0.
\end{equation}
Notice that
\begin{equation} \label{Edetached}
\partial E \cap \partial \R^{n + 1}_- = \varnothing,
\end{equation}
since, otherwise, the strong comparison principle of Proposition~\ref{prop:compprinc} would yield that~$E = \R^{n + 1}_-$. After a dilation, we may then assume that the cylinder~$\overline{B_2'} \times [-2, 0]$ has empty intersection with~$E$.

For~$\alpha \in (0, 1)$ fixed, consider now the set~$F = F_{n, \alpha}$ defined in~\eqref{Fdef}, along with its rescaling
\begin{equation} \label{Fepsdef}
F_\varepsilon \coloneqq \varepsilon F = \Big\{ {x \in \R^{n + 1}} \mbox{ s.t. } x_{n + 1} < \varepsilon^{1 - \alpha} |x'|^\alpha \Big\}
\end{equation}
of a factor~$\varepsilon > 0$. Thanks to Proposition~\ref{prop:barrier}\ref{barriercaseii} and the scaling properties of the~$s$-mean curvature, we have that
\begin{equation} \label{HsFepssmall}
\left| \mathrm H_{s, F_\varepsilon}(x) \right| \le C_0 \, \frac{\varepsilon^{1 - \alpha}}{|x|^{s + 1 - \alpha}} \quad \mbox{for all } x \in \partial F_{\varepsilon} \setminus B_\varepsilon,
\end{equation}
for some constant~$C_0 \ge 1$ depending only on~$n$,~$s$, and~$\alpha$. Then, for~$j \in \mathbb{N}$, let
\begin{equation} \label{Ejdef}
E_j \coloneqq E + \frac{1}{j} \, e_{n + 1}.
\end{equation}
Clearly,~$E_j$ has zero~$s$-mean curvature in the viscosity sense. Moreover, it satisfies
\begin{equation} \label{EjinLj}
E_j \subset \bigg( {\R^n \times \Big( {-\infty, \frac{1}{j}} \Big)} \bigg) \setminus \bigg( {B_1' \times \Big[ {-1, \frac{1}{j}} \Big)} \bigg)
\end{equation}
and therefore
\begin{equation} \label{EjinFepsilon}
E_j \subset F_\varepsilon \quad \mbox{for every } \varepsilon \ge j^{- \frac{1}{1 - \alpha}}.
\end{equation}

Define
\begin{equation} \label{epsjdef}
\varepsilon_j \coloneqq \inf \Big\{ {\varepsilon > 0 \text{ s.t. } E_j \subset F_\varepsilon} \Big\}.
\end{equation}
Thanks to~\eqref{EjinFepsilon}, we have that~$\varepsilon_j$ is a well-defined non-negative real number such that
\begin{equation} \label{epsilonsmallerthan1overj}
\varepsilon_j \le j^{- \frac{1}{1 - \alpha}}.
\end{equation}
In particular,
\begin{equation} \label{epsjgoestozero}
\lim_{j \rightarrow +\infty} \varepsilon_j = 0.
\end{equation}
Our goal is to show that
\begin{equation} \label{fundamentalclaim}
\mbox{there exists } j_0 \in \N \mbox{ for which } \varepsilon_{j_0} = 0.
\end{equation}

\item[] \textbf{Step 2. Finding a diverging sequence of touching points.}
In order to prove~\eqref{fundamentalclaim}, we argue by contradiction and suppose instead that~$\varepsilon_j > 0$ for every~$j \in \N$. From this assumption, it immediately follows that, for any fixed~$j \in \N$, there exists a sequence of points~$\{ p_{k, j} \}_{k \in \N}$ with~$p_{k, j} \in E_j \setminus F_{\varepsilon_j - \frac{1}{k}}$, for~$k$ large. Note that this sequence is bounded as, indeed, the combination of inclusion~\eqref{EjinLj} with the definition of the sets~$F_{\varepsilon_j - \frac{1}{k}}$ gives that~$|p_{k, j}| \le 4 \varepsilon_j^{1 - \frac{1}{\alpha}}$ for all large enough~$k$'s. Therefore, up to a subsequence,~$\{ p_{k, j} \}_{k \in \N}$ converges to some limit point~$p_j \in \overline{E_j} \setminus F_{\varepsilon_j}$. Since, by definition of~$\varepsilon_j$, it holds~$\overline{E_j} \subset \overline{F_{\varepsilon_j}}$, we infer that~$p_j$ belongs to~$\partial E_j \cap \partial F_{\varepsilon_j}$. Furthermore, from inclusion~\eqref{EjinLj} it immediately follows that~$|p_j'| \ge 1$ and
\begin{equation} \label{pjverticallysmall}
(p_j)_{n + 1} \in \Big( {0, \frac{1}{j}} \Big].
\end{equation}

We claim that
\begin{equation} \label{pjunbounded}
\mbox{the sequence } \{ p_j \}_{j \in \N} \mbox{ is unbounded}.
\end{equation}
Arguing by contradiction, we assume that it is bounded and thus, it converges to some point~$p \in \R^{n + 1}$, up to a subsequence. Recalling~\eqref{epsjgoestozero}, definitions~\eqref{Fepsdef} and~\eqref{Ejdef}, as well as the fact that~$p_j \in \partial E_j \cap \partial F_{\varepsilon_j}$ for every~$j \in \N$, it is clear that~$p \in \partial E \cap \partial \R^{n + 1}_-$. As this contradicts~\eqref{Edetached}, we conclude that~\eqref{pjunbounded} holds true. Note that from~\eqref{pjverticallysmall} and~\eqref{pjunbounded} it follows in particular that, up to an unrelabeled subsequence,
\begin{equation} \label{|pj'|diverges}
\lim_{j \rightarrow +\infty} |p_j| = \lim_{j \rightarrow +\infty} |p_j'| = +\infty.
\end{equation}

\item[] \textbf{Step 3. Ruling out ``graph-type'' surfaces.} Let now~$\{ \ell_j \}_{j \in \N} \subset (0, 1)$ be a sequence fulfilling the requirements
\begin{equation} \label{reqonelljandRj}
\lim_{j \rightarrow +\infty} \ell_j = 0 \quad \mbox{and} \quad \lim_{j \rightarrow +\infty} j \ell_j = \lim_{j \rightarrow +\infty} \ell_j^{n + 2} |p_j|^{1 - \alpha} = +\infty.
\end{equation}
Note that such a sequence exists, by virtue of~\eqref{|pj'|diverges}---indeed, one can take for instance
$$
\ell_j \coloneqq \max \Bigg\{ {\frac{1}{\sqrt{j + 1}}, \frac{1}{\big( {|p_j| + 2} \big)^{\frac{1 - \alpha}{2(n + 2)}}}} \Bigg\}.
$$
Define
\begin{equation} \label{LambdaCjdefs}
\Lambda_j \coloneqq \Big\{ {x \in \R^{n + 1} \mbox{ s.t. } x_{n + 1} < - \ell_j |x'|} \Big\} \quad \mbox{and} \quad \cC_j \coloneqq \Lambda_j \cap B_{4 |p_j|},
\end{equation}
for~$j \in \N$. We claim that
\begin{equation} \label{Cjnotincuded}
\mbox{there exists } j_1 \in \N \mbox{ such that } \cC_j + p_j \not\subset E_j \mbox{ for every integer } j \ge j_1.
\end{equation}
To prove this, we argue by contradiction and suppose that, up to an unrelabeled subsequence,
\begin{equation} \label{Ejisfat}
\cC_j + p_j \subset E_j \quad \mbox{for every large } j.
\end{equation}
Consider then the translated rescalings
\begin{equation} \label{Etildejdef}
\widetilde{E}_j \coloneqq \frac{E_j - p_j}{2 |p_j|}.
\end{equation}
Each~$\widetilde{E}_j$ has vanishing~$s$-mean curvature in the viscosity sense. Moreover, thanks to~\eqref{Ejisfat} we have that
$$
\Big\{ {x \in B_2 \mbox{ s.t. } x_{n + 1} < - \ell_j |x'|} \Big\} \subset \widetilde{E}_j,
$$
for all large~$j$'s. By this,~\eqref{EjinLj},~\eqref{pjverticallysmall}, and~\eqref{|pj'|diverges}, it is immediate to see that
$$
\bigg\{ {x \in B_2 \mbox{ s.t. } x_{n + 1} < - 2 \ell_j} \bigg\} \subset \widetilde{E}_j \cap B_2 \subset \bigg\{ {x \in B_2 \mbox{ s.t. } x_{n + 1} < \frac{1}{j}} \bigg\},
$$
if~$j$ is large enough.

By virtue of this and the fact that~$\{ \ell_j \}$ is infinitesimal by~\eqref{reqonelljandRj}, we can apply the~$\varepsilon$-regularity theory for flat hypersurfaces with vanishing~$s$-mean curvature and deduce that, for every~$j \in \N$ sufficiently large, there exists a function~$\varphi_j \in C^{1, \frac{s}{2}}(B_1')$ satisfying
\begin{equation} \label{varphijC1betabound}
\| \nabla \varphi_j \|_{C^{\frac{s}{2}}(B_1')} \le C_1
\end{equation}
and such that, up to negligible sets,
\begin{equation} \label{Ejtildesubgraph}
\widetilde{E}_j \cap \Big( {B'_1 \times \big( {- \sigma_1, \sigma_1} \big)} \Big) = \Big\{ {x \in B'_1 \times \big( {- \sigma_1, \sigma_1} \big) \mbox{ s.t. } x_{n + 1} < \varphi_j(x')} \Big\},
\end{equation}
for some constants~$C_1 \ge 1$ and~$\sigma_1 \in (0, 1)$ depending only on~$n$ and~$s$. This can be obtained directly from~\cite[Theorem~6.1]{CRS10} (which is stated for minimisers of the~$s$-perimeters but actually holds for mere viscosity solutions of the Euler-Lagrange equation) or from the more general and recent~\cite[Theorem~1.10]{M25}.

Going back to~$E$ (through transformations~\eqref{Ejdef} and~\eqref{Etildejdef}), this entails that~$E$ is a subgraph inside every cylinder
$$
B'_{2 |p_j|}(p_j') \times \bigg( {(p_j)_{n + 1} - \frac{1}{j} - 2 \sigma_1 |p_j|, (p_j)_{n + 1} - \frac{1}{j} + 2 \sigma_1 |p_j|} \bigg).
$$
Since, by~\eqref{|pj'|diverges}, these cylinders exhaust the whole of~$\R^{n + 1}$---for instance because the above cylinder contains~$B_{|p_j| / 2}' \times \big( - \sigma_1 |p_j|, \sigma |p_j| \big)$ for every~$j$ sufficiently large---, we conclude that~$E$ is globally the subgraph of a function~$\varphi: \R^n \to \R$ of class~$C^{1, \frac{s}{2}}$. By linking this piece of information to~\eqref{Ejtildesubgraph}, we see that
$$
\varphi(x') = (p_j)_{n + 1} - \frac{1}{j} + 2 |p_j| \varphi_j \bigg( {\frac{x' - p'}{2 |p_j|}} \bigg) \quad \mbox{for all } x' \in B'_{\frac{|p_j|}{2}},
$$
for~$j$ large enough. But then,~\eqref{varphijC1betabound} gives that
$$
[\nabla \varphi]_{C^{\frac{s}{2}} \big( {B'_{|p_j|/4}} \big)} \le \frac{C_1}{|p_j|^{\frac{s}{2}}} \quad \mbox{for every large } j.
$$
Recalling again~\eqref{|pj'|diverges}, we obtain that~$\nabla \varphi$ is constant in~$\R^n$ and thus that~$E$ is a half-space, contradicting our standing assumption~\eqref{Enothalfspace}. As a result, claim~\eqref{Cjnotincuded} must be true.

\item[] \textbf{Step 4. Turning~$F_{\varepsilon_j}$ into a supersolution: looking for some extra mass outside~$E_j$.}
We now claim that, for~$j$ large enough, there exists a set~$D_j$ such that
\begin{equation} \label{keyclaim}
\dist \big( {D_j, p_j} \big) > 0, \quad D_j \subset F_{\varepsilon_j} \setminus E_j, \quad \mbox{and} \quad \int_{D_j} \frac{\dd y}{|y - p_j|^{n + 1 + s}} \ge \frac{c_2}{|p_j|^{s + 1 - \alpha}},
\end{equation}
for some constant~$c_2 \in (0, 1)$ depending only on~$n$ and~$s$. To find such a set~$D_j$, we first observe that, defining
$$
\widehat{\Lambda}_j \coloneqq \bigg\{ {x \in \R^{n + 1} \mbox{ s.t. } x_{n + 1} < - \frac{\ell_j}{4} \, |x'|} \bigg\},
$$
it holds
\begin{equation} \label{ChatjinFj}
\widehat{\Lambda}_j + p_j \subset F_{\varepsilon_j} \quad \mbox{for every large } j.
\end{equation}%
To verify this, let~$x \in \widehat{\Lambda}_j + p_j$, i.e.,~$x \in \R^{n + 1}$ such that~$x_{n + 1} + \frac{\ell_j}{4} \, |x' - p_j'| < (p_j)_{n + 1}$. Of course, if~$x' \in \R^n \setminus B_{4 (p_j)_{n + 1} / \ell_j}(p_j')$, then~$x_{n + 1} < 0$ and therefore~$x \in \R^{n + 1}_- \subset F_{\varepsilon_j}$. Conversely, let~$x' \in B_{4 (p_j)_{n + 1} / \ell_j}(p_j')$. Notice that
$$
|x'| \ge |p_j'| - |x' - p_j'| \ge |p_j'| - \frac{4 (p_j)_{n + 1}}{\ell_j} \ge |p_j'| - \frac{4}{j \ell_j} \ge 1,
$$
if~$j$ is sufficiently large, independently of~$x$, thanks to~\eqref{pjverticallysmall},~\eqref{|pj'|diverges}, and~\eqref{reqonelljandRj}. By taking advantage of this, along with~\eqref{epsilonsmallerthan1overj}, the identity~$(p_j)_{n + 1} = \varepsilon_j^{1 - \alpha} |p_j'|^\alpha$ (which holds since~$p_j \in \partial F_{\varepsilon_j}$), and the elementary inequality~$(1 + t)^\alpha \le 1 + \alpha t$ for every~$t \ge 0$, we compute
\begin{align*}
x_{n + 1} + \frac{\ell_j}{4} \, |x' - p_j'| & < (p_j)_{n + 1} = \varepsilon_j^{1 - \alpha} |p_j'|^\alpha \le \varepsilon_j^{1 - \alpha} \big( {|x'| + |p'_j - x'|} \big)^\alpha \\
& = \varepsilon_j^{1 - \alpha} |x'|^\alpha \bigg( {1 + \frac{|x' - p'_j|}{|x'|}} \bigg)^{\! \alpha} \le \varepsilon_j^{1 - \alpha} |x'|^\alpha \bigg( {1 + \alpha \, \frac{|x' - p'_j|}{|x'|}} \bigg) \\
& \le \varepsilon_j^{1 - \alpha} \bigg( {|x'|^\alpha + \frac{|x' - p'_j|}{|x'|^{1 - \alpha}}} \bigg) \le \varepsilon_j^{1 - \alpha} |x'|^\alpha + \frac{1}{j} |x' - p'_j|.
\end{align*}
Recalling~\eqref{reqonelljandRj}, this gives
$$
x_{n + 1} < \varepsilon_j^{1 - \alpha} |x'|^\alpha - \frac{1}{j} \left( \frac{j \ell_j}{4} - 1 \right) |x' - p_j'| < \varepsilon_j^{1 - \alpha} |x'|^\alpha,
$$
that is~$x \in F_{\varepsilon_j}$, if~$j$ is large enough, independently of~$x$. Thus,~\eqref{ChatjinFj} is proved and we can now address the construction of a set~$D_j$ satisfying~\eqref{keyclaim}.

\item[] \textbf{Step 5a. Construction of the set~$D_j$: the density estimates case.}
In this step, we assume that~\ref{tech-a} is in force. In order to construct~$D_j$ in this case, we observe that property~\eqref{Cjnotincuded} leaves us with two possibilities: for each~$j \ge j_1$, either
\begin{enumerate}[label=$(\roman*)$,leftmargin=23pt]
\item \label{casei} $\cC_j + p_j \subset \R^{n + 1} \setminus \overline{E_j}$,$ \,$ or
\item \label{caseii} $(\cC_j + p_j) \cap \partial E_j \ne \varnothing$.
\end{enumerate}

When~\ref{casei} occurs, we simply take
$$
D_j \coloneqq B_{\frac{|p_j|}{8}} \Big( {p_j - \frac{|p_j|}{2} \, e_{n + 1}} \Big).
$$
A straightforward verification shows that~$D_j \subset \cC_j + p_j \subset \widehat{\Lambda}_j + p_j$---and hence~$D_j \subset F_{\varepsilon_j} \setminus E_j$, recalling~\eqref{ChatjinFj} and~\ref{casei}, if~$j$ is sufficiently large. Moreover,~$|D_j| = 8^{- n - 1} |B_1| |p_j|^{n + 1}$ and~$D_j \subset B_{|p_j|}(p_j)$, from which it follows
$$
\int_{D_j} \frac{\dd y}{|y - p_j|^{n + 1 + s}} \ge \frac{|D_j|}{|p_j|^{n + 1 + s}} = \frac{|B_1|}{8^{n + 1}} \frac{1}{|p_j|^s} \ge \frac{|B_1|}{8^{n + 1}} \frac{1}{|p_j|^{s + 1 - \alpha}},
$$
thanks to~\eqref{|pj'|diverges}. Thus,~\eqref{keyclaim} holds true when~\ref{casei} is in force.

The construction of~$D_j$ in case~\ref{caseii} is a bit more involved. Take a point~$q_j \in (\cC_j + p_j) \cap \partial E_j$ and define
$$
D_j \coloneqq B_{\frac{(p_j - q_j)_{n + 1}}{2}}(q_j) \setminus E_j.
$$
By definition,~$D_j \subset \R^{n + 1} \setminus E_j$. In addition, using that~$q_j \in \cC_j + p_j$, for any~$x \in D_j$ we have
\begin{equation} \label{DjinB}
\begin{aligned}
|x - p_j| & \le |x - q_j| + |q_j - p_j| < \frac{1}{2} \, \big( {p_j - q_j} \big)_{n + 1} + \sqrt{|q_j' - p_j'|^2 + \big( {q_j - p_j} \big)_{n + 1}^2} \\
& < \Bigg( {\frac{1}{2} + \sqrt{\frac{1}{\ell_j^2} + 1}} \, \Bigg) \big( {p_j - q_j} \big)_{n + 1} \le \frac{4}{\ell_j} \, \big( {p_j - q_j} \big)_{n + 1}
\end{aligned}
\end{equation}
and
\begin{equation} \label{DjinLambdahat}
\begin{aligned}
& \big( {x - p_j} \big)_{n + 1} + \frac{\ell_j}{4} \, |x' - p_j'|\\
& \hspace{40pt} \le \big( {x - q_j} \big)_{n + 1} + \big( {q_j - p_j} \big)_{n + 1} + \frac{\ell_j}{4} \, |x' - q_j'| + \frac{\ell_j}{4} \, |q_j' - p_j'| \\
& \hspace{40pt} \le \frac{5}{4} \, |x - q_j| + \big( {q_j - p_j} \big)_{n + 1} + \frac{1}{4} \, \big( {p_j - q_j} \big)_{n + 1} < - \frac{1}{8} \, \big( {p_j - q_j} \big)_{n + 1} < 0.
\end{aligned}
\end{equation}
This latter estimate yields that~$D_j \subset \widehat{\Lambda}_j + p_j$, from which, recalling~\eqref{ChatjinFj}, it follows that~$D_j \subset F_{\varepsilon_j}$. In order to establish the lower bound for the integral in~\eqref{keyclaim}, we first notice that, in view of assumption~\ref{tech-a}, the complement~$\R^{n + 1} \setminus E_j$ enjoys uniform density estimates around~$q_j$ at all scales. Hence,~$|D_j| \ge c_3 \big( {(p_j - q_j)_{n + 1}} \big)^{n + 1}$ for some constant~$c_3 \in (0, 1)$ independent of~$j$. On the other hand,~\eqref{DjinB} gives that~$D_j \subset B_{\frac{4}{\ell_j} (p_j - q_j)_{n + 1}}(p_j)$. Consequently,
\begin{align*}
\int_{D_j} \frac{\dd y}{|y - p_j|^{n + 1 + s}} & \ge \frac{|D_j|}{\Big( {\frac{4}{\ell_j} \big( {p_j - q_j} \big)_{n + 1}} \Big)^{n + 1 + s}} \ge \frac{c_3}{4^{n + 2}} \frac{\ell_j^{n + 2}}{\big( {p_j - q_j} \big)_{n + 1}^s} \\
& \ge \frac{c_3}{4^{n + 3}} \frac{\ell_j^{n + 2}}{|p_j|^s} \ge \frac{c_3}{4^{n + 3}} \frac{1}{|p_j|^{s + 1 - \alpha}},
\end{align*}
if we take~$j$ large enough. Note that the second-to-last inequality follows using that~$q_j \in B_{4 |p_j|}(p_j)$ (recall that~$q_j \in \cC_j + p_j$ and definition~\eqref{LambdaCjdefs} of~$\cC_j$), while for the last inequality we took advantage of condition~\eqref{reqonelljandRj} on~$\ell_j$. Therefore, claim~\eqref{keyclaim} holds also when~\ref{caseii} is assumed.

\item[] \textbf{Step 5b. Construction of the set~$D_j$: the~$C^1$~case.} To obtain the set~$D_j$ when~\ref{tech-b} is assumed, we use a variant of the sliding method. Consider the \emph{ice cream cone} shaped sets
$$
\mathscr{G}_{r, j} \coloneqq \bigcup_{x \in \left( \Lambda_j \cap B_r \right) + p_j} B_{\frac{(p_j - x)_{n + 1}}{4}}(x) \quad \mbox{for } r > 0.
$$
Note that the sets~$\{ \mathscr{G}_{r, j} \}_{r > 0}$ are expanding as~$r$ grows. Also, simple computations show that
\begin{equation} \label{GinLambdahat}
\big({\Lambda_j \cap B_r }\big) + p_j \subset \mathscr{G}_{r, j} \subset \big({\widehat{\Lambda}_j \cap B_{2 r}}\big) + p_j \quad \mbox{for all } r > 0.
\end{equation}

Let
$$
\mathcal{R}_j \coloneqq \Big\{ {r > 0 \mbox{ s.t. } \mathscr{G}_{r, j} \subset E_j} \Big\}.
$$
We claim that the set~$\mathcal{R}_j$ is a non-empty interval for every large~$j$. To see this, we observe that, as a consequence of the~$C^1$-regularity of~$\partial E$ prescribed by hypothesis~\ref{tech-b} and the fact that~$\partial E_j$ is touched from above at~$p_j$ by the graph of a smooth function, for~$j$ large enough it holds
$$
E_j \cap B_{\delta_j}(p_j) = \Big\{ {x \in B_{\delta_j}(p_j) \mbox{ s.t. } x_{n + 1} < u_j(x')} \Big\},
$$
for some small~$\delta_j \in (0, 1) $ and some function~$u_j \in C^1(\R^n)$. As~$p_j \in \partial E_j \cap \partial F_{\varepsilon_j}$ and~$\overline{E_j} \subset \overline{F_{\varepsilon_j}}$, we have that~$u_j(p_j') = \varepsilon_j^{1 - \alpha} |p_j'|^\alpha$,~$\nabla u_j(p_j') = \alpha \varepsilon_j^{1 - \alpha} |p_j'|^{\alpha - 2} p_j'$, and~$0 \le \varepsilon_j^{1 - \alpha} |x'|^\alpha - u_j(x') = o \big( {|x' - p_j'|} \big)$ as~$x' \rightarrow p_j'$. Thanks to these facts and arguing almost exactly as in the proof of~\eqref{ChatjinFj} given earlier, one sees that
$$
\big( {\widehat{\Lambda}_j \cap B_{2 r}} \big) + p_j \subset E_j,
$$
for every~$r \in \big( {0, \frac{\delta_j}{2}} \big]$ sufficiently small in dependence of~$j$, provided~$j$ is large enough. From this and the right-most inclusion in~\eqref{GinLambdahat} it follows that~$\mathcal{R}_j$ is non-empty, provided~$j$ is taken sufficiently large.

Let then
$$
r_j \coloneqq \sup \mathcal{R}_j.
$$
As~$\mathcal{R}_j \ne \varnothing$, we know that~$r_j$ is a well-defined positive real number if~$j$ is large. Also,~$r_j$ is finite and smaller or equal to~$4 |p_j|$, in view of the left-most inclusion in~\eqref{GinLambdahat}, claim~\eqref{Cjnotincuded}, and definition~\eqref{LambdaCjdefs}. Hence,
\begin{equation} \label{rjle2Rjpj}
r_j \in \big( {0, 4 |p_j|} \big] \quad \mbox{for every~$j \in \N$ sufficiently large}.
\end{equation}
In consequence of this, it is not hard to see that there exist a point~$q_j \in \partial \, \mathscr{G}_{r_j, j} \cap \partial E_j$ and, corresponding to it, another point~$x_j \in \big( {\overline{\Lambda_j} \cap \partial B_{r_j}} \big) + p_j$ such that~$B_{\frac{(p_j - x_j)_{n + 1}}{4}}(x_j) \subset E_j$ and~$q_j \in \partial B_{\frac{(p_j - x_j)_{n + 1}}{4}}(x_j) \cap \partial E_j$.

We define
$$
D_j \coloneqq B_{\frac{(p_j - x_j)_{n + 1}}{4}}(q_j) \setminus E_j.
$$
Using the information on the positions of~$q_j$ and~$x_j$ obtained in the previous paragraph, computations similar to~\eqref{DjinB}-\eqref{DjinLambdahat} give that~$D_j \subset \big( { \widehat{\Lambda}_j \cap B_{\frac{4}{\ell_j} (p_j - x_j)_{n + 1}} } \big) + p_j$. In particular, we have that~$D_j \subset F_{\varepsilon_j}$, recalling~\eqref{ChatjinFj}. Furthermore, as~$E_j$ is touched from the inside at~$q_j \in \partial E_j$ by the ball~$B_{\frac{(p_j - x_j)_{n + 1}}{4}}(x_j)$, Proposition~\ref{prop:densestfors-stat} yields that~$|D_j| \ge c_4 \big( {(p_j - x_j)_{n + 1}} \big)^{n + 1}$ for some constant~$c_4 \in (0, 1)$ depending only on~$n$ and~$s$. By these facts, along with the inclusion~$x_j \in \partial B_{r_j}(p_j)$ and the bound~\eqref{rjle2Rjpj}, we obtain that
\begin{align*}
\int_{D_j} \frac{\dd y}{|y - p_j|^{n + 1 + s}} & \ge \frac{|D_j|}{\Big( {\frac{4}{\ell_j} \big( {p_j - x_j} \big)_{n + 1}} \Big)^{n + 1 + s}} \ge \frac{c_3}{4^{n + 2}} \frac{\ell_j^{n + 2}}{\big( {p_j - x_j} \big)_{n + 1}^s} \\
& \ge \frac{c_3}{4^{n + 2}} \frac{\ell_j^{n + 2}}{r_j^s} \ge \frac{c_3}{4^{n + 3}} \frac{\ell_j^{n + 2}}{|p_j|^s} \ge \frac{c_3}{4^{n + 3}} \frac{1}{|p_j|^{s + 1 - \alpha}},
\end{align*}
thanks to condition~\eqref{reqonelljandRj} and provided~$j$ is large enough. This completes the proof of claim~\eqref{keyclaim}.

\item[] \textbf{Step 6. Conclusion.} We are now in position to finish the proof. For~$j$ large, define
$$
\widetilde{F}_j \coloneqq F_{\varepsilon_j} \setminus D_j.
$$
By taking advantage of~\eqref{keyclaim} and estimate~\eqref{HsFepssmall} (which can be applied in view of~\eqref{|pj'|diverges}), we see that
$$
\mathrm H_{s, \widetilde{F}_j}(p_j) = \mathrm H_{s, F_{\varepsilon_j}}(p_j) + 2 \int_{D_j} \frac{\dd y}{|y - p_j|^{n + 1 + s}} \ge \frac{c_2 - C_0 \, \varepsilon_j^{1 - \alpha}}{|p_j|^{s + 1 - \alpha}} > 0,
$$
if~$j$ is large enough, thanks to~\eqref{epsjgoestozero}. But this is impossible, as~$E_j$ satisfies~$H_{s, E_j} \le 0$ at~$p_j$ in the viscosity sense and~$E_j \subset \widetilde{F}_j$, by~\eqref{keyclaim}. We thus found a contradiction and must conclude that claim~\eqref{fundamentalclaim} holds true. Consequently,~$\varepsilon_{j_0} = 0$ for some~$j_0 \in \N$, which, going back to definition~\eqref{epsjdef}, allows us to conclude that~$E_{j_0} \subset F_\varepsilon$ for every~$\varepsilon > 0$. Therefore,~$E_{j_0} \subset \R^{n + 1}_-$ or, equivalently,~$E \subset \R^n \times \left( - \infty, - \frac{1}{j_0} \right)$. Since this contradicts~\eqref{Eclosetohorhyp}, assumption~\eqref{Enothalfspace} must necessarily be incorrect. Hence,~$E$ is a half-space and the proof of the theorem is complete.\qedhere
\end{itemize}
\end{proof}

\end{document}